\newcommand{\cc}{\mathcal}
\newcommand{\rr}{\mathbb{R}}
\newcommand{\bb}{\mathbb}
\newcommand{\rn}{\mathbb{R}^N}
\newcommand{\sla}{\backslash}
\newcommand{\bdisp}{\begin{displaymath}}
\newcommand{\edisp}{\begin{displaymath}}
\newcommand{\bsplit}{\begin{split}}
\newcommand{\esplit}{\end{split}}
\newcommand{\bacc}{\left\{}
\newcommand{\eacc}{\right\}}
\newcommand{\bp}{\left(}
\newcommand{\ep}{\right)}
\newcommand{\bint}{\left[}
\newcommand{\eint}{\right]}
\newcommand{\bnorme}{\left\|}
\newcommand{\enorme}{\right\|}
\newcommand{\babsolu}{\left|}
\newcommand{\eabsolu}{\right|}
\newcommand{\be}{\begin{equation}}
\newcommand{\ee}{\end{equation}}
\newcommand{\ba}{\begin{array}}
\newcommand{\ea}{\end{array}}
\newcommand{\bdes}{\begin{description}}
\newcommand{\edes}{\end{description}}
\newcommand{\benu}{\begin{enumerate}}
\newcommand{\eenu}{\end{enumerate}}
\newcommand{\associe}{\mapsto}
\newtheorem{defi}{Definition}[section]
\newtheorem{lem}[defi]{Lemma}
\newtheorem{pro}[defi]{Proposition}
\newtheorem{thm}[defi]{Theorem}
\newtheorem{rmq}[defi]{Remark}
\newcommand{\dal}{\Delta^{\frac{\alpha}{2}}}
\newcommand{\pa}{{\frac{\alpha}{2}}}
\title{Blow up boundary solutions of some semilinear fractional equations in the unit ball}
\author{Mohamed Ben Chrouda$^a$ \and Mahmoud Ben Fredj$^b$}
\begin{document}
\maketitle
\begin{center}{\scriptsize{\emph{$^a$Dep. of Mathematics, High institut of computer sciences and mathematics, 5000, Monastir, Tunisia }}}\end{center}
\begin{center}{\scriptsize{\emph{$^b$Dep. of Mathematics, Faculty of Sciences of Monastir, 5019 Monastir, Tunisia}}}\end{center}
\begin{abstract} For $\gamma>0$, we are interested in blow up  solutions $u\in \cc{C}^+(B)$ of the fractional problem in the unit ball $B$
\begin{equation}\label{2nov}\bacc\ba{rcll}\dal u&=&u^\gamma&\ \text{in }B\\
u&=&0&\ \text{in }B^c.\ea\right.\end{equation}
We distinguish particularly two orders of singularity at the boundary:  solutions  exploding  at the same rate than $\delta^{1-\frac{\alpha}{2}}$ ($\delta$ denotes the Euclidean distance) and those  higher singular than $\delta^{1-\frac{\alpha}{2}}.$ As a consequence, it will be shown     that the classical Keller-Osserman condition can not be readopted in the fractional setting.
\end{abstract}
\textbf{Keywords:}  {Fractional Laplacian, semilinear equation, large solution, fractional Dirichlet problem, Green function, Martin kernel.}
\section{Introduction}
Let $N\geq 3,$  $0<\alpha<2$ and let $(X_t)_{t\geq0}$ be the standard $\alpha$-stable process in $\rn.$ It is determined by its  characteristic function which takes the form
\[E^x\bp e^{i\xi(X_t-X_0)}\ep=e^{-t\babsolu \xi\eabsolu^\alpha}\quad;\quad \xi\in\rn,\]
where $E^x$ is the expectation with respect to the distribution $P^x$ of the process starting from $x\in\rn.$
It is a discontinuous Markov process and  give rise to equations with the fractional Laplacian $\dal$.

Nonlocal operators such as $\dal$ naturally arise in population dynamics, continuum mechanics,   game theory and some other fields, we quote for instance \cite{abdel,elgart,lieb, majda,vlahos,weitzner}, one can see also \cite{games}  for broader discussion.

In the classical setting, i.e. for $\alpha=2,$ one of the most commonly considered equations in the literature is the following:
\begin{equation}\label{ei}
\Delta u=\varphi(u)\quad\text{in }B,
\end{equation}
where $B$ is the unit ball of $\rn$ and $\varphi: [0,\infty[\to\rr$ is a some nonnegative nondecreasing function. Such equations appear naturally in several interesting contexts including  in the random systems of branching particles \cite{dynkin,gall}. Solutions of (\ref{ei}) verifying
\[\lim_{x\to\partial B}u(x)=\infty\]
are called large solutions or blow up (boundary) solutions.
 Merely for the sake of  completeness, we shall recall the pioneering work of \cite{keller} and \cite{osserman}. They proved independently that a   necessary and sufficient condition for the existence of a large solution to (\ref{ei}), where $\varphi$ is a positive nondecreasing function, is
\[\int_1^\infty\bp \int_0^s \varphi(t)\,dt\ep^{-\frac{1}{2}}\,ds<\infty,\]
that in the case of $\varphi(u)=u^\gamma,$ means $\gamma>1.$

The aim of the paper is to study \eqref{ei} substituting the classical Laplacian by one of its fractional powers. Our consideration is motivated by the natural question whether the classical results (particularly the Keller-Osserman condition) in this field may be extended to nonlocal operators.

The behavior of $\alpha$-harmonic functions contrasts, in some respects, with the one of the classical harmonic functions. Indeed, due to the jumping nature of the $\alpha$-stable process, roughly speaking, at the exit time one could end up anywhere outside the domain. Put differently, the process typically leaves domains by jumping to the interior of its complement while the continuous paths of Brownian motion leave domains by hitting the boundary. This spans the existence of positive harmonic functions on $B $ blowing up at the boundary. Such functions are called singular $\alpha$-harmonic functions and they are harmonic for the stable process $(X_t^B)_{t\geq0}$ killed on leaving $B$.
In the Brownian motion case, such functions do not exist due to the Fatou theorem and nontangentiel convergence of positive harmonic functions \cite{axler,6,wu78}.
In this sense, the Martin kernel $M_B^\alpha$
$$ \nu\to \int_{\partial B} \frac{(1-|x|^2)^\pa}{|x-y|^N}\nu(dy)\quad ;\quad x\in B
$$ provides a one-to-one correspondence between positive Random measures $\nu$ on  $\partial B$ and positive $\alpha$-harmonic functions on $B$  which supports the fact that singular $\alpha$-harmonic functions constitute, in some respects, the appropriate class of "harmonic functions". Nevertheless, the usual probabilistic interpretation of singular $\alpha$-harmonic functions on $B$ as solutions of Dirichlet problem is no longer true. The interested reader is referred to \cite[Teorem 3.18]{chenmartin}  where the authors provide some probabilistic interpretation of these functions. In particular,
\[ M_B^\alpha 1(x):=\int_{\partial B} \frac{(1-|x|^2)^\pa}{|x-y|^N}\sigma(dy)\quad ;\quad x\in B,\]
 defines a singular $\alpha$-harmonic on $B$ and it behaves like $\delta(x)^{\pa-1}$ on $B.$ Here,  $\sigma$ denotes the surface area measure on $\partial B$ and $\delta(x):=1-|x|$ is the Euclidean distance from $x$ to the boundary $\partial B.$ This entitles us  to study the following  appropriate reformulated semilinear Dirichlet problem associated to $\dal$ in $B$ taking in account the aspects raised  above.
 \begin{equation}\label{pp1}\bacc\ba{l}\dal u=u^\gamma\text{ in }B\\
 u=0\text{ in }B^c\\
 \displaystyle{\lim_{x\to z}\delta(x)^{1-\pa}u(x)=g(z)}\ea\right.
 \end{equation}
  where $\gamma>0$ and $g$ is a nonnegative continuous function on $\partial B.$
  It is worth noting that solutions of this problem explode on the boundary at the same  rate than $\delta^{\pa-1}$ while if we take $\alpha=2$ they don't. That is why solutions to the problem   (\ref{pp1}),  behaving like  the singular $\alpha$-harmonic function $M_B^\alpha 1$, in some respects, are not  blow up solutions in the strict sens of the word. We rather call them "moderate blow up solutions". In this spirit, we shall consider a second semilinear Dirichlet problem
   \begin{equation}\label{pp2}\bacc\ba{l}\dal u=u^\gamma\text{ in }B\\
 u=0\text{ in }B^c\\
 \displaystyle{\lim_{x\to z}\delta(x)^{1-\pa}u(x)=\infty}\ea\right.
 \end{equation}
 where $\gamma>0$ as before and we shall call its solutions by "blow up solutions". Note that they explode on the boundary even for $\alpha=2.$ In other words, such solutions,  are somehow the ones that   "large solution" (or "blow up solution") are  in the classical setting.

 Recently, a growing and renewed attention is paying to existence of boundary blow up solutions to equations involving  fractional powers of the Laplacian in bounded domains. Among the works, we quote \cite{maagli,nicola,chenfelmer,patriciofelmer} (the list is remotely incomplete).  
   In \cite{chenfelmer},  authors proved the existence of a real $\tau_0(\alpha)\in]-1,0[$ such that for
 \begin{equation}\label{gamma}
 1+\alpha<\gamma<1-\frac{\alpha}{\tau_0(\alpha)},
 \end{equation}
 the problem (\ref{2nov}) admits a solution $u$ verifying
 \[0<\liminf_{x\to\partial B}\delta(x)^{\frac{\alpha}{\gamma-1}}u(x)\leq \limsup_{x\to\partial B}\delta(x)^{\frac{\alpha}{\gamma-1}}u(x)<\infty.\]
 Let us emphasize from the very beginning  that (\ref{gamma}) turns out to be  nearly optimal and it will appear also in our work even though our treated problem is slightly different and the notion of solution is not the same. Indeed, as we will precise later, we deal with distributional solutions (those satisfying the equation when integrated  against a suitable set of test functions) and not with viscosity solutions (i.e. every smooth function touching from below or above the continuous solution is required to be a supersolution or a subsolution) as the case in \cite{patriciofelmer} or in \cite{chenfelmer}. We shall not elaborate further on this but we  just want to retain that for fractional equations, the notion of weak solution implies the one of viscosity solution \cite{servadei2014}.

 Let us now describe the main results of this paper more precisely and at the same time give the outline.  We first record some tools of potential theory pertaining to the killed process.  Next,  we prove some technical lemmas and in particular, we take a closer look at the potential $G_B^\alpha(\delta^{-\lambda}),$ $\lambda<1+\pa.$
  We shall afterwards  characterise  nonnegative solutions to problem (\ref{pp1}), in the case where $0<\gamma<\frac{2+\alpha}{2-\alpha},$ in terms of the Green operator $G_B^\alpha$ and the Martin kernel $M_B^\alpha.$ More precisely, we prove in this case that $u$ is a nonnegative solution of problem (\ref{pp1}) if and only if
\[u+G_B^\alpha u^\gamma=M_B^\alpha g.\]
 Then, we shall use the previous material to establish our first main result. More specifically, for every nontrivial boundary datum $g\in\cc{C}^+(\partial B),$ we prove that problem (\ref{pp1}) admits a  nonnegative solution  $u \in \cc{C}(B)$
if and only if $0<\gamma<\frac{2+\alpha}{2-\alpha}$ and that the solution is unique.
  The last section will be devoted to the problem (\ref{pp2}). Namely we divide the real half-line into four intervals. We prove that problem (\ref{pp2}) has a nonnegative solution $u\in\cc{C}(B)$ for $1+\alpha<\gamma<\frac{2+\alpha}{2-\alpha}$ and that it has no solution for $0<\gamma<1+\frac{\alpha}{2}.$ We have left open the ranges $1+\frac{\alpha}{2}\leq\gamma\leq 1+\alpha$ and $\gamma\geq \frac{2+\alpha}{2-\alpha}.$ It is worth noting that    the classical Keller-Osserman condition can not been fundamentally readopted in the framework of fractional Laplacian, else, taking $\alpha=2,$ we obtain that (\ref{pp2}) has no solution for $\gamma<2,$ which is not consistent with the Brownian motion case, because as we recalled above,  this problem admits a solution for every $\gamma\geq 1$ and hence for $1\leq \gamma<2.$ In other words, the classical Keller-Osserman condition is no longer the range where  large solutions exist or do not exist.

 Motivated by the considerable advances on extending potential-theoretic properties of Brownian motion to symmetric stable process \cite{bogdan1997,bogdan1999,bogdanby,bogdandistribution,chensong1997,chenmartin,chensong1998,kim2006}, there is often an interesting natural generalization of classical results. This work hopefully allows us to better understand how the nonlocal character and the jumping nature of stable processes influences the boundary behavior of solutions. We would like to point out that the approach which we follow is based essentially on some analytic tools from potential theory and it is completely different from that used in  \cite{chenfelmer}.  In this paper we do not use probabilistic techniques in an essential way, keeping them only for interpretation.
\section{Preliminaries}

 For every subset  $F$ of $\bb{R}^N$, let $\cc{B}(F)$ be the set of all Borel measurable functions on $F$ and
 let $\cc{C}(F)$ be the set of all continuous real-valued functions on $F.$ If $\cc{G}$ is a set of numerical
  functions then $\cc{G}^+$ (respectively $\cc{G}_b$) will denote the class of all functions in $\cc{G}$ which
  are nonnegative (respectively bounded). $\cc{C}^k(F)$ is the class of all functions that are $k$ times   continuously differentiable on $F.$ The uniform  norm will be denoted by $\left\|\cdot\right\|.$

  For every subset $A$ of $\rn$ we denote its complement by $A^c=\rn\sla A,$ its closure by $\overline{A}$ and its Euclidean boundary by $\partial A=\overline{A}\cap \overline{A^c}.$

  For two nonnegative functions $f$ and $g$, the notation $f\approx g$ means that there are positive constants $c_1$ and $c_2$ such that $c_1g(x)\leq f(x)\leq c_2g(x)$ in the common domain of definition for $f$ and $g.$

Let $\alpha\in ]0,2[$ and $N\geq3.$ The fractional Laplacian $\dal=-(-\Delta)^{\frac{\alpha}{2}}$ in $\rn$ is a nonlocal operator defined on the Schwartz class through the Fourier transform
\[\mathcal{F}\bp\bp-\Delta\ep^{\frac{\alpha}{2}}f\ep\bp\xi\ep=\bp 2\pi\babsolu \xi\eabsolu\ep^\alpha\cc{F}(f),\]
where $\cc{F}(f)(\xi)=\int_{\rn} e^{-2\pi iy\cdot \xi}f(y)dy$ up a constant. It does not act by pointwise differentiation but by a global integration with respect to a singular kernel: for every Borel function $u$ for which the integral exists,
\[\Delta^{\frac{\alpha}{2}}u(x)=c_{N,\alpha}\int_{\rn}\frac{u(y)-u(x)}{\babsolu y-x\eabsolu^{N+\alpha}}\,dy;\qquad x\in\rn,\]
 in such a way the nonlocal character  is emphasized.
Here the constant $c_{N,\alpha}$ is  depending only on $N$ and $\alpha:$ $c_{N,\alpha}=2^{\alpha}\pi^{-\frac{N}{2}}\babsolu\Gamma(\frac{-\alpha}{2})\eabsolu^{-1}\Gamma(\frac{N+\alpha}{2}).$ The natural space that we are going to use is a weighted $\cc{L}^1$-space:
\[\mathcal{L}_\alpha=\bacc u:\rn\to\rr\text{ Borel measurable };\ \int_{\rn}\frac{\babsolu u(y)\eabsolu}{\bp1+\babsolu y\eabsolu\ep^{N+\alpha}}dy<\infty\eacc.\]
In this space,  we can define $\dal $ as a distribution  by
\[<\dal u,\varphi>=\int_{\rn}u(y)\,\dal\varphi(y)\,dy;\quad \varphi\in \mathcal{C}_c^\infty(\rn),\]
where $ \mathcal{C}_c^\infty(\rn)$ is the set of all infinitely differentiable functions on $\rn$ with compact support.
In probabilistic terms, the fractional  operator $\dal$ is the infinitesimal generator of  $\alpha$-stable processes which constitute an important class of discontinuous Markov processes and which are widely used in several areas such as  mathematical finance,  probability, and physics. For more applications and a more complete account on the literature one can see the survey \cite{simulation}.

We denote by $(\Omega,X_t,P^x)$ the standard rotation  invariant  $\alpha$-stable $\rn$-valued Lévy process (homogeneous with independent increment and right continuous sample paths).
As usual, $E^x$ is the expectation with respect to the distribution $P^x$ of the process starting from $x\in\rn.$ For the sake of brevity, we will refer to this process by "symmetric $\alpha$-stable process". The limiting classical case $\alpha=2$ corresponds to the Brownian motion with Laplacian $\Delta=\sum_{i=1}^N\partial_i^2$ as generator.

Let $D$ be a bounded open set and let $X^D$ be the killed process defined by $X^D_t(\omega)=X_t(\omega)$ if $t< \tau_D(\omega)$ and set $X_t^D(\omega)=\partial$ if $t\geq \tau_D(\omega)$ where $\partial$ is a coffin state added to $\rn.$

Unlike the Brownian motion case, there are two different kinds of harmonicity with respect to symmetric $\alpha$-stable process: function which are harmonic in $D$ with respect to the process $X$ and functions which are harmonic in $D$ with respect to the killed process $X^D.$ The precise definitions are as follows.
\begin{defi}
Let $u\in \mathcal{L}_\alpha$ be a Borel measurable locally integrable function on $\rn.$
\benu
 \item We say that $u$ is $\alpha$-harmonic in $D$  if $u(x)=E^x[u(X_{\tau_U})], x\in U,$ for every bounded open set $U$ with closure $\overline{U}$ contained in $D.$ If, in addition, $u(x)=0,$ for every $x\in D^c,$ we say that $u$ is singular $\alpha$-harmonic.
 \item The function $u$ is said to be $\alpha$-superharmonic in $D$ if $f$ is lower semi continuous and $u(x)\geq E^x[u(X_{\tau_U})],$\  $x\in U,$ for every bounded open set $U$ with closure $\overline{U}$ contained in $D.$
 \item The function $u$ is called  $\alpha$-harmonic  with respect to $X^D$ if $u(x)=E^x[u(X_{\tau_U}^D)],$\  $x\in U,$ for every bounded open set $U$ with closure $\overline{U}$ contained in $D.$
 \eenu
 \end{defi}
 \begin{rmq}\label{rmq2}
 \benu
 \item Note that a singular $\alpha$-harmonic function in $D$ is $\alpha$-harmonic with respect to $X^D.$ Conversely, for a $\alpha$-harmonic function with respect to $X^D,$ if we extend it to be zero off the domain $D$ then the resulting function is singular $\alpha$-harmonic function in $D.$

 \item As in the classical case ($\alpha=2$) the $\alpha$-harmonicity can be formulated in terms of the fractional power of the Laplacian. Indeed,  a function $u\in\mathcal{L}_\alpha$ is $\alpha$-harmonic in $D$ if and only if it is continuous in $D$ and $\dal u=0$ in $D$ in the distributional sense (see e.g., \cite[Theorem 3.9]{bogdanby}).
 \item The following minimum principle for $\alpha$-superharmonic functions holds \cite{silvestre}. If $u$ is lower semi continuous on $\overline{D},$ $\alpha$-superharmonic in $D$ and $u\geq 0$ on $D^c$ then $u\geq 0$ in $D.$
 \eenu
 \end{rmq}

A point $x\in\partial D$ is called regular for the set $D$ if $P^x(\tau_D=0)=1.$ The (open) bounded domain $D$ is called regular if all $x\in \partial D$ are regular for $D$ (for instance,  $C^{1,1}$-domains and domains satisfying the exterior cone condition are regular). In this case each function $f,$ say in  $ \cc{C}_b(D^c),$ admits an  extension $H_D^\alpha f$ on $\rn$ such that $H_D^\alpha f$ is   $\alpha$-harmonic ~in ~$D$\cite{Landkof}. In other words,
  the function $h=H_D^\alpha f$ is the unique solution to the fractional Dirichlet problem
\[
\left\{
\begin{array}{rrrl}
\dal h&=&0& \mbox{in}\  D,\\
h&=&f&\textrm{in}\  D^c.
\end{array}
\right.
\]
 Note that $f$ must be defined on all $D^c$ because the data have to take into account the nonlocal character of the operator. For every $x\in D,$ the $\alpha$-harmonic measure relative to $x$ and $D,$
which will be denoted by $H_D^\alpha(x,\cdot),$
 is defined to be the positive Radon measure on $ D^c$ given by the mapping $f\mapsto H_D^\alpha f(x).$

 It is worthwhile to remark that the $\alpha$-harmonic measure for a  regular domain $D$ is typically defined (as a function) in the whole $\rn$ and is supported (as a measure) in the complementary of $D$ contrary to the classical harmonic measure which is defined  in $\overline{D}$ and supported on the boundary of $D.$ This can be better interpreted from a probabilistic point of view. It exhibits the fact that the sample paths of the $\alpha$-stable process fails to be continuous and hence at the exit time they may hit $D^c$ at points of $\overline{D}^c$ and not necessarily at points of $\partial D$ as the case of the Brownian motion.

 It is proved in \cite{bogdan1997} that for $D$ say Lipschitz, $H_D^\alpha(x,\cdot)$ is concentrated on  $\overline{D}^c$ and is absolutely continuous with respect to the Lebesgue measure on $D^c.$ Furthermore, the corresponding density function $K_D(x,y),$ $x\in D,$ $y\in D^c,$ is continuous in $(x,y)\in D\times$  ~$\overline{D}^c.$ In this situation, the solution of the Dirichlet problem can be expressed in term of the Poisson kernel $K_D$ as follows \cite{chensong1997}
 \[ H_D^\alpha f(x)=E^x\bint f(X_{\tau_D})\eint=\int_{D^c}K_D(x,y)\,f(y)\,dy\quad;\quad x\in D.\]
 The density $K_{B_r}^\alpha$ of the $\alpha$-harmonic measure for a ball $B_r$ of center $0$ and radius $r$ is given by an explicit formula
 \begin{equation}\label{poisson}
K_{B_r}(x,y)= c_\alpha^N \left(\frac{r^2-|x|^2}{|y|^2-r^2}\right)^{\frac{\alpha}{2}}\frac{1}{|x-y|^N},\;\; \textrm{ for }\, |x|<r\;\textrm{ and }\, |y|>r,
\end{equation}
where $c_\alpha^N:= \pi^{-1-N/2}\Gamma(N/2)\sin(\pi\alpha/2)$.
 The potential operator $G_D^\alpha$ for the process $(X_t^D)$ is defined  for every Borel measurable function~$f$ for which the following
   identity exists,  by
 \[G_D^\alpha f(x)=E^x\left[\int_{0}^{\infty}f(X_t^D)dt\right]=E^x\left[\int_{0}^{\tau_D}f(X_t)dt\right].\]
   If $G_D^\alpha f\not\equiv \infty$ then $G_D^\alpha f$ is $\alpha$-superharmonic. Also for every  $f\in \cc{B}_b(D)$,
 $G_D^\alpha f$ is a bounded continuous function on~$D$ satisfying $\lim_{x\rightarrow z}G_D^\alpha f(x)=~0$ for every  $z\in \partial D$ if we suppose further that $D$ is regular . The proof of these elementary properties follows  the line of the corresponding one for the Laplacian performed in \cite{12} or \cite{5}.
 On the other hand,  for every $f\in\mathcal{B}(\rn)$ such that $G_D^\alpha\babsolu f\eabsolu(x)<\infty$ for some $x\in\rn,$ we have
 \begin{equation}\label{operateurgreen}
 \dal G_D^\alpha f=-f
 \end{equation} in the distributional sense (  \cite[Lemma 5.3]{bogdandistribution}).
 The kernel $G_D^\alpha(\cdot,\cdot)$ of the operator $G_D^\alpha$ is called the Green function of $D$ and is defined by
$$
  G_D^\alpha f(x)=\int_DG_D^\alpha(x,y)f(y)dy\quad;\quad x\in D.
$$
 We shall record some well known facts of the Green function. $G_D^\alpha(x,y)$ is symmetric in $x$ and $y$ and $G_D^\alpha(\cdot,\cdot)$ is positive  and continuous in the extended sense as a mapping from $D\times D$ into $[0,\infty].$  Also $G_D^\alpha(x,y)=0$ if $x$ or $y$ belongs to $D^c.$   The Green function of the whole space $\rn,$ which is also called Riesz kernel, is given explicitly by
 \[G^\alpha_{\rn}(x,y)=\frac{C_{N,-\alpha}}{\babsolu x-y\eabsolu^{N-\alpha}}.\]   Also, the explicit formula for the Green function of a ball $B_r$ of center $0$ and radius $r$ is well known:
 \begin{equation}\label{green}
 G^\alpha_{B_r}(x,y)=\frac{\kappa_{N,\alpha}}{\babsolu x-y\eabsolu^{N-\alpha}}\int_0^{\frac{\bp r^2-\babsolu x\eabsolu^2\ep\bp r^2-\babsolu y\eabsolu^2\ep}{\babsolu x-y\eabsolu^2}}\frac{s^{\frac{\alpha}{2}-1}}{(1+s)^{\frac{N}{2}}}\,ds\quad;\quad x,y\in B_r,
 \end{equation}
 where $\kappa_{N,\alpha}=\Gamma(N/2)/\left(2^\alpha\pi^{N/2}[\Gamma(\alpha/2)]^2\right)$,
 and the following scaling property holds
 \begin{equation}\label{scaling}
 G_{B_r}^\alpha (x,y)=r^{\alpha-N}G_B^\alpha(\frac xr,\frac{y}{r})\quad;\quad x, y\in B_r.
 \end{equation}
  Let $D$ be a bounded $C^{1,1}$ domain in $\rn.$ We denote by  $\delta(x):=\inf_{z\in\partial D}|x-z|$   the Euclidean distance
from $x\in D$ to the boundary of~$D$. For convenience, we shall set $\delta(x)=0$ for $x\in D^c.$ The following sharp estimates on the Green function are established in   \cite{chensong1998}.
\begin{equation}\label{green1}
   G_D^\alpha(x,y)\leq c \min \bacc\frac{1}{|x-y|^{N-\alpha}},\frac{\delta(x)^{\frac{\alpha}{2}}\delta(y)^{\frac{\alpha}{2}}}{|x-y|^N}\eacc
  \end{equation}
 \begin{equation}\label{green5}
   G_D^\alpha(x,y)\leq c \frac{\delta_(y)^{\frac{\alpha}{2}}}{\babsolu x-y\eabsolu^{N-\pa}}
 \end{equation}
 \begin{equation}\label{mingreen}
 G_D^\alpha(x,y)\geq c \frac{\delta_(x)^{\frac{\alpha}{2}}\delta_(y)^{\frac{\alpha}{2}}}{\babsolu x-y\eabsolu^{N}}\;\; \textrm{ if }\;\; |x-y|>\max\{\frac{\delta(x)}{2},\frac{\delta(y)}{2}\}.
 \end{equation}
  Here and in the sequel, the letter $c$ with or without subscripts, signifies a positive constant which may change from one location to another (even in the same ligne) depending only  on $N$ and  $\alpha$ but eventually on another variable which will be specified in the context.
 \section{Some Technical lemmas}
 In this section, we shall give two preparatory technical lemmas but each of them turns out to be useful in itself. The first one provides an approximation for the fractional Laplacian value of some important subfamily of functions. The second one gives some useful properties for the potential $G_B^\alpha(\delta^{-\lambda}).$

 In the proof of the following lemma we will make use of the Gaussian hypergeometric function. We do not know one reference for all its basic properties but most of the material useful to our purposes can be found in \cite{erdely,table2,love}.
 \begin{lem}\label{fond1}
Let $-\frac{\alpha}{2}<\beta<1$ and consider the function $u$ defined on $\rn$ by
$$
u(x)=
  \frac{1}{(1-|x|^2)^{\beta}} \;\; \mbox{if }\; |x|< 1\quad\text{ and }\quad
  u(x)=0 \;\; \mbox{if }\; |x|\geq 1.
$$
Then
\begin{equation}\label{calcul}
 \dal u(x)\approx  \frac{\alpha+2\beta-2}{(1-|x|^2)^{\alpha+\beta}}\quad \text{in }B.
\end{equation}
\end{lem}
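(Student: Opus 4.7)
The idea is to exploit the rotational symmetry of $u$ together with an explicit closed-form identity for the fractional Laplacian of truncated radial powers. Since $-\beta>-1$, the function $u$ is integrable on $B$ and vanishes outside, so $u\in\mathcal{L}_\alpha$ and $\Delta^{\alpha/2}u$ is a well-defined distribution; by radial symmetry it depends only on $|x|$, reducing the problem to the study of a single variable $r=|x|\in[0,1)$.

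The key ingredient is the identity (going back to Getoor and Blumenthal--Getoor--Ray, in the form systematized by Dyda): for every real $s>-1$ and every $|x|<1$,
\[
(-\Delta)^{\frac{\alpha}{2}}\bigl[(1-|\cdot|^2)_+^{s}\bigr](x)
\;=\;\frac{2^{\alpha}\,\Gamma\!\bigl(\tfrac{N+\alpha}{2}\bigr)\,\Gamma(1+s)}{\Gamma\!\bigl(\tfrac{N}{2}\bigr)\,\Gamma\!\bigl(1+s-\tfrac{\alpha}{2}\bigr)}\;{}_2F_1\!\Bigl(\tfrac{N+\alpha}{2},\,\tfrac{\alpha}{2}-s;\,\tfrac{N}{2};\,|x|^2\Bigr).
\]
Specializing to $s=-\beta$ produces an explicit expression for $\Delta^{\alpha/2}u$ in $B$ as a Gaussian hypergeometric function with parameters $a=\tfrac{N+\alpha}{2}$, $b=\tfrac{\alpha+2\beta}{2}$, $c=\tfrac{N}{2}$.

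The factor $\alpha+2\beta-2$ in (\ref{calcul}) emerges from the gamma-prefactor. Using $\Gamma(z+1)=z\Gamma(z)$ at $z=1-\beta-\tfrac{\alpha}{2}=-(\alpha+2\beta-2)/2$ one finds
\[
\frac{1}{\Gamma\!\bigl(1-\beta-\tfrac{\alpha}{2}\bigr)}\;=\;-\,\frac{\alpha+2\beta-2}{2\,\Gamma\!\bigl(2-\beta-\tfrac{\alpha}{2}\bigr)},
\]
and throughout the range $-\tfrac{\alpha}{2}<\beta<1$ this denominator is positive and bounded away from $0$ and $\infty$, as are the factors $\Gamma(\tfrac{N+\alpha}{2})$, $\Gamma(1-\beta)$, $\Gamma(\tfrac{N}{2})$. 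Combined with the sign change $\Delta^{\alpha/2}=-(-\Delta)^{\alpha/2}$, the total prefactor equals $(\alpha+2\beta-2)$ times a strictly positive constant depending only on $N$, $\alpha$ and $\beta$; in particular, $u$ turns out to be $\alpha$-harmonic exactly at the critical value $\beta=1-\alpha/2$, which is a reassuring consistency check with the behaviour of $M_B^\alpha 1$.

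Finally one analyses the hypergeometric factor on $[0,1)$: since $\beta>-\alpha/2$ all three parameters are positive, and $c-a-b=-\alpha-\beta<0$, so ${}_2F_1(a,b;c;r)$ is positive and continuous on $[0,1)$, equals $1$ at $r=0$, and the classical endpoint asymptotic
\[
{}_2F_1(a,b;c;r)\;\underset{r\to 1^-}{\sim}\;\frac{\Gamma(c)\,\Gamma(a+b-c)}{\Gamma(a)\,\Gamma(b)}\,(1-r)^{-\alpha-\beta}
\]
applies. Consequently the ratio ${}_2F_1(a,b;c;r)/(1-r)^{-\alpha-\beta}$ extends continuously and positively to $[0,1]$, which yields ${}_2F_1(a,b;c;|x|^2)\approx(1-|x|^2)^{-\alpha-\beta}$ uniformly on $B$. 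Combining this with the prefactor analysis gives (\ref{calcul}). The principal obstacle is the rigorous invocation of the closed-form identity in the second step: either one cites it from the literature on hypergeometric representations of fractional Laplacians of ball-restricted polynomials, or, to be self-contained, one verifies it by pairing $u$ with test functions in $\cc{C}_c^\infty$ and expanding ${}_2F_1$ in power series, using explicit Riesz-type potential computations on the unit ball.
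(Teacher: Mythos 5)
Your proposal is correct, and it reaches \eqref{calcul} by a genuinely different route than the paper. You invoke the closed-form Getoor--Dyda identity for $(-\Delta)^{\frac{\alpha}{2}}\bigl[(1-|\cdot|^2)_+^{s}\bigr]$ and conclude via the classical endpoint behaviour of ${}_2F_1$; your bookkeeping is sound: the identity as you state it is the correct one (it passes the consistency checks $s=\frac{\alpha}{2}$, the torsion function, and $s=\frac{\alpha}{2}-1$, where the prefactor vanishes --- your $\alpha$-harmonicity remark), the specialization $s=-\beta$ is legitimate since $\beta<1$ gives $s>-1$, the extraction of the factor $\alpha+2\beta-2$ from $1/\Gamma(1-\beta-\frac{\alpha}{2})$ is right (with $2-\beta-\frac{\alpha}{2}\in\,]1-\frac{\alpha}{2},2[$, the remaining Gamma factors are positive and finite), and $c-a-b=-\alpha-\beta<0$ does follow from $\beta>-\frac{\alpha}{2}$; your exact formula also makes the signed reading of \eqref{calcul} transparent in both regimes $\beta\gtrless 1-\frac{\alpha}{2}$, which is how the lemma is actually used later. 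The paper instead proves the asymptotics from scratch: setting $v=\dal u$, it establishes $u+G_{B_t}(v)=H_{B_t}u$ and evaluates at the centre, computes $H^\alpha_{B_t}u(0)$ and $G_{B_t}v(0)$ explicitly from the Poisson kernel \eqref{poisson} and the Green function \eqref{green}, reduces everything to the Abel-type equation \eqref{fraceq1}, inverts this fractional integral, differentiates, and after Euler transformations obtains $v(t)=\frac{(\alpha+2\beta-2)C(\alpha,\beta)}{C(\alpha)}(1-t^2)^{-(\alpha+\beta)}\phi(t)$ with $\phi$ positive and continuous on $[0,1]$ --- in effect a self-contained rederivation of the radial profile of the very identity you cite, whose intermediate formulas (notably \eqref{euler}) are moreover reused later in the nonexistence half of the theorem on moderate blow up solutions. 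So your version is shorter and yields the exact constant, at the cost of an external citation (which you rightly flag as the principal obstacle); the paper's version is longer but self-contained modulo classical tables and feeds its by-products into later proofs. Two details to secure in your write-up: since the paper's $\dal$ is distributional, state explicitly that for this $u\in\mathcal{L}_\alpha$, smooth inside $B$, the pointwise principal-value expression represents $\dal u$ as a distribution on $B$ (this also underlies the paper's local boundedness claim for $v$); and when $\alpha+\beta$ happens to be a positive integer the usual $z\to 1$ connection formula for ${}_2F_1$ degenerates with logarithmic terms, so quote the asymptotics in the limit form $\lim_{r\to1^-}(1-r)^{a+b-c}\,{}_2F_1(a,b;c;r)=\Gamma(c)\,\Gamma(a+b-c)/\bigl(\Gamma(a)\,\Gamma(b)\bigr)$, valid whenever $c-a-b<0$, which is all your argument needs.
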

\begin{proof}
For every $0<t<1$, let $B_t$ denotes the ball of $\rn$ of center $0$ and radius $t$. For every $x\in B$, we denote $v(x):=\dal u(x)$. We first claim that,
for every $0<t<1$ and every $x\in B_t$,
$$
u(x)+G_{B_t}(v)(x)=H_{B_t}u(x),
$$
and in particular,
\be \label{uv0}
u(0)+G_{B_t}(v)(0)=H_{B_t}u(0).
\ee
In fact, using the integral expression of $\dal u$, it is not hard to see that $v$ is locally bounded on $B$. Thus, it follows from (\ref{operateurgreen}) that $\dal G_{B_t}^\alpha(v)=-v$ on $B_t$ and $\lim_{x\to z}G_{B_t}^\alpha(v)(x)=0$ for every $z\in \partial B_t$. This yields that the function $h$ defined on $\rn$ by $h(x)= u(x)+G_{B_t}(v)(x)$  is  $\alpha$-harmonic in $B_t$ and satisfies $h=u$ on $B_t^c$. Therefore, $h=H_{B_t}u$ on $B_t$ as desired.

By (\ref{poisson}) and using the spherical coordinates, for every $0<t<1$, we get
\begin{eqnarray}
H_{B_t}^\alpha u(0) &=& \frac{2\sin(\frac{\alpha\pi}{2})}{\pi}\int_t^1\frac{t^\alpha}{s(1-s^2)^\beta(s^2-t^2)^{\frac\alpha2}}ds\nonumber\\
            &=& C(\alpha,\beta)t^\alpha(1-t^2)^{1-\beta-\frac\alpha2}F(1, 1-\beta, 2-\beta-\frac\alpha2, 1-t^2)\label{tab1}\\
            &=& C(\alpha,\beta)(1-t^2)^{1-\beta-\frac\alpha2}F(1-\beta-\frac\alpha2, 1-\frac\alpha2, 2-\beta-\frac\alpha2, 1-t^2)\label{euler},
\end{eqnarray}
where $C(\alpha,\beta)$ is a positive constant.
Equalities (\ref{tab1}) and (\ref{euler}) are obtained, respectively, by formula (3.228) in \cite{table2} (or (33) page 250 in \cite{erdely}) and by the  Euler transformation of the Gaussian hypergeometric function $F(a, b, c, \cdot)$. On the other hand, it follows from (\ref{green}) that, for $|y|<1$,
\begin{eqnarray}
G_B(0,y) &=& \kappa_{N,\alpha}(1-|y|^2)^{\frac\alpha2}|y|^{-N}\int_0^1s^{\frac\alpha2-1}\left(1+(\frac{1-|y|^2}{|y|^2})s\right)^{-\frac N2}ds\nonumber\\
         &=& \frac{2\kappa_{N,\alpha}}{\alpha}|y|^{-N}(1-|y|^2)^{\frac\alpha2}F(\frac N2, \frac\alpha2, 1+\frac\alpha2, 1-\frac{1}{|y|^2})\nonumber\\
         &=& \kappa_{N,\alpha}|y|^{-N}\int_{|y|^2}^1(1-s)^{\frac\alpha2-1}F(\frac N2, \frac\alpha2, \frac\alpha2, 1-\frac{s}{|y|^2})ds \label{love}\\
         &=& \kappa_{N,\alpha}|y|^{-N}\int_{|y|^2}^1(1-s)^{\frac\alpha2-1} \left(\frac{s}{|y|^2}\right)^{-\frac N2} ds\nonumber\\
         &=& 2\kappa_{N,\alpha}\int_{|y|}^1s^{1-N}(1-s^2)^{\frac\alpha2-1}ds,\nonumber
\end{eqnarray}
where the equality~(\ref{love}) can be deduced from  formula (3.7) in \cite{love}. Then, using the scaling property (\ref{scaling}) and the spherical coordinates, we obtain
\begin{eqnarray*}
G_{B_t}v(0) &=& t^{\alpha-N}\int_{B_t}G_B(0,\frac{y}{t})v(y)dy\\
       &=& C(\alpha) t^{\alpha-N}\int_0^t\lambda^{N-1}v(\lambda)\int_{\frac{\lambda}{t}}^1s^{1-N}(1-s^2)^{\frac\alpha2-1}dsd\lambda\\
   &=&C(\alpha) \int_0^t\lambda^{N-1}v(\lambda)\int_\lambda^ts^{1-N}(t^2-s^2)^{\frac\alpha2-1}dsd\lambda\\
 &=& C(\alpha)\int_0^t(t^2-s^2)^{\frac\alpha2-1}s^{1-N}\int_0^s\lambda^{N-1}v(\lambda)d\lambda ds,
\end{eqnarray*}
where $C(\alpha)$ is a positive constant. Here, we identify $v(|y|)$ with $v(y)$ since $v$ is radially symmetric on $B$.
So, the equation
(\ref{uv0}) can be rewritten in the following equivalent form
\begin{equation}\label{fraceq1}
\int_0^t(t^2-s^2)^{\frac\alpha2-1}\psi(s)ds=\varphi(t),\;\;\forall\,0<t<1,
\end{equation}
where $\varphi(t):=H_{B_t}u(0)-u(0)$ and
\begin{equation}\label{p}
\psi(s):=C(\alpha)\,s^{1-N}\int_0^s\lambda^{N-1}v(\lambda)d\lambda.
\end{equation}
taking in account that $H_{B_t}u(0)\to u(0)$ as $t\to 0$,
 we can extend $\varphi$ continuously
by setting $\varphi(0)=0$. So, by a standard fractional calculus, we can express  $\psi$ in
terms of $\varphi$ in (\ref{fraceq1}) as follows
\begin{eqnarray}
\psi(t)&=& 2\frac{\sin(\alpha\pi/2)}{\pi}\left[\varphi(0)t^{1-\alpha}+t\,\int_0^t\frac{\varphi'(s)}{(t^2-s^2)^{\frac\alpha2}}\,ds\right]\nonumber\\
&=&2\frac{\sin(\alpha\pi/2)}{\pi}t\,\int_0^t\frac{\varphi'(s)}{(t^2-s^2)^{\frac\alpha2}}\,ds\label{pp}.
\end{eqnarray}
Next, using (\ref{euler}) and the fact that  $(z\partial_z+c).F(a, b, c+1, z)= c\,F(a, b, c, z)$, we easily obtain
\begin{equation}\label{ppp}
\varphi'(s)= (\alpha+2\beta-2)\,C(\alpha,\beta)\,\frac{s^{\alpha-1}}{(1-s^2)^{\beta+\frac\alpha2}}.
\end{equation}
Plugging (\ref{ppp}) into (\ref{pp}), we get
\begin{eqnarray}
\psi(t) &=& (\alpha+2\beta-2)C(\alpha,\beta)\frac{\sin(\alpha\pi/2)}{\pi}\, t \, \int_0^1\frac{s^{\frac\alpha2-1}}{(1-s)^{\frac\alpha2}(1-s\,t^2)^{\beta+\frac\alpha2}}ds\nonumber\\
        &=& (\alpha+2\beta-2)C(\alpha,\beta)\, t\, F(\beta+\frac\alpha2, \frac\alpha2, 1, t^2)\label{pppp}.
\end{eqnarray}
Multiplying (\ref{pppp}) by $t^{N-1}$, differentiating with respect to $t$, and combining this
together with (\ref{p}),
$$
C(\alpha)v(t) = (\alpha+2\beta-2)C(\alpha,\beta)\left[N\,F(\beta+\frac\alpha2, \frac\alpha2, 1, t^2)+ \frac{\alpha(\alpha+2\beta)}{2}\,t^2\,F(1+\beta+\frac\alpha2, 1+\frac\alpha2, 2, t^2)\right].
$$
By Euler transformation we get
$$
v(t) = \frac{(\alpha+2\beta-2)C(\alpha,\beta)}{C(\alpha)}\frac{1}{(1-t^2)^{\alpha+\beta}}\phi(t),
$$
with
$$
\phi(t)=N(1-t^2)F(1-\beta-\frac\alpha2, 1-\frac\alpha2, 1, t^2)+ \frac{\alpha(\alpha+2\beta)}{2}\,t^2\,F(1-\beta-\frac\alpha2, 1-\frac\alpha2, 2, t^2).
$$
Hence, the fact that the function $\phi$ is (strictly) positive and continuous on the closed interval $[0,1]$  complete the proof.
\end{proof}

Using the fact that $\min(s,t)\approx \frac{s\,t}{s+t}$ for $s,t>0,$ it follows from (\ref{green1}) that
\[G_B^\alpha(x,y)\approx \frac{1}{\babsolu x-y\eabsolu^{N-\alpha}}\frac{\delta(x)^{\frac{\alpha}{2}}\delta(y)^\pa}{(\babsolu x-y\eabsolu^2+\delta(x)\delta(y))^\pa}\cdot\]
Hence, for every $x,y\in B$ such that $\babsolu x-y\eabsolu \geq r$ we have
\begin{equation}\label{green3}
G_B^\alpha(x,y)\leq c \frac{\delta(x)^\pa\delta(y)^\pa}{r^N}\cdot
\end{equation}
\begin{lem}\label{continuity}
Let $\lambda\in \rr$ such that $\lambda<1+\frac{\alpha}{2}$ and consider the function $h$ defined on $B$ by
$$
h(x)=\int_B G_B^\alpha(x,y)\delta(y)^{-\lambda}dy.
$$
Then $h$ satisfies the following properties. \benu
\item[(a)]$h(x)<\infty$ for every $x\in B.$
\item[(b)] $h$ is continuous on $B$.
\item[(c)] $\lim_{x\to z}\delta(x)^{1-\pa}h(x)=0$ for every $z\in\partial B.$
\eenu
\end{lem}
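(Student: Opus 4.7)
The starting point for all three assertions is the Green bound \eqref{green5}, which gives
\[
G_B^\alpha(x,y)\,\delta(y)^{-\lambda}\;\le\;c\,\frac{\delta(y)^{\alpha/2-\lambda}}{|x-y|^{N-\alpha/2}}.
\]
Part (a) is then immediate: the right-hand side has an integrable singularity at $y=x$ (since $N-\alpha/2<N$) and an integrable boundary decay (since $\alpha/2-\lambda>-1$ thanks to $\lambda<1+\alpha/2$), so $h(x)<\infty$ for every $x\in B$.

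For (b), fix $x_0\in B$ and a relatively compact neighborhood $V$ of $x_0$ with $\overline{V}\subset B$. For small $\eta>0$ write $\delta^{-\lambda}=f_1^\eta+f_2^\eta$ with $f_1^\eta:=\delta^{-\lambda}\mathbf{1}_{\{\delta\ge\eta\}}\in\mathcal{B}_b(B)$ and $f_2^\eta:=\delta^{-\lambda}-f_1^\eta$. The standard properties of $G_B^\alpha$ recalled in the preliminaries give $G_B^\alpha f_1^\eta\in\mathcal{C}(B)$. For the tail, observe that if $\eta$ is smaller than the distance from $V$ to $\partial B$, then $|x-y|$ is bounded below on $\{\delta(y)<\eta\}$ by a constant independent of $x\in V$; combining this with the displayed estimate and $\int_B\delta^{\alpha/2-\lambda}dy<\infty$ yields $\sup_{x\in V}|G_B^\alpha f_2^\eta(x)|\to0$ as $\eta\to 0$. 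Hence $h|_V$ is the uniform limit of continuous functions, hence continuous at $x_0$.

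For (c), fix $z\in\partial B$ and let $x\in B$ with $\delta(x)$ small; decompose $B=B_1\cup B_2\cup B_3$ with
\[B_1=\{y\in B:|y-x|\le\delta(x)/2\},\;\;B_2=\{y\in B:\delta(x)/2<|y-x|\le\tfrac12\},\;\;B_3=\{y\in B:|y-x|>\tfrac12\}.\]
On $B_1$ we have $\delta(y)\approx\delta(x)$; together with $G_B^\alpha(x,y)\le c|x-y|^{\alpha-N}$ from \eqref{green1} this yields $\int_{B_1}\le c\,\delta(x)^{\alpha-\lambda}$ when $\lambda>0$ (and $\le c\,\delta(x)^{\alpha/2}$ when $\lambda\le 0$). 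On $B_3$, inequality \eqref{green3} with $r=\tfrac12$ gives $G_B^\alpha(x,y)\le c\,\delta(x)^{\alpha/2}\delta(y)^{\alpha/2}$, so $\int_{B_3}\le c\,\delta(x)^{\alpha/2}$. The delicate part is $B_2$: use \eqref{green1} in the form $G_B^\alpha(x,y)\le c\,\delta(x)^{\alpha/2}\delta(y)^{\alpha/2}|x-y|^{-N}$ and a dyadic decomposition $A_k=\{2^{k-1}\delta(x)<|y-x|\le 2^k\delta(x)\}$ for $k=1,\dots,\lfloor\log_2(1/\delta(x))\rfloor$. The tubular estimate $|\{y\in B:|y-x|\le r,\,\delta(y)\le s\}|\le C\min(r^N,sr^{N-1})$, integrated layer-cake, produces the per-annulus bound $c(2^k\delta(x))^{-(\lambda-\alpha/2)}$ when $\lambda>\alpha/2$; summing the resulting convergent geometric series gives $\int_{B_2}\le c\,\delta(x)^{\alpha-\lambda}$, while for $\lambda\le\alpha/2$ the same scheme yields $\int_{B_2}\le c\,\delta(x)^{\alpha/2}\log(1/\delta(x))$. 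In every case, after multiplication by $\delta(x)^{1-\alpha/2}$ one obtains a strictly positive power of $\delta(x)$ (possibly with a logarithm) since $\lambda<1+\alpha/2$, and $\delta(x)^{1-\alpha/2}h(x)\to 0$ as $x\to z$.

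The main obstacle is the annulus $B_2$ when $\lambda$ is close to the threshold $1+\alpha/2$: the diagonal singularity $|x-y|^{-N}$ and the boundary singularity $\delta^{-\lambda}$ are essentially on the edge of integrability, and only the combination of the dyadic decomposition with the tubular-volume bound extracts the sharp decay $\delta(x)^{\alpha-\lambda}$ that is just barely enough for the weight $\delta(x)^{1-\alpha/2}$ to force the limit to vanish.
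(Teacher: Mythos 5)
Your proposal is correct, but it reaches the key assertion (c) by a genuinely different route than the paper. The paper gets (c) from Lemma \ref{fond1}: applying $G_B^\alpha$ to both sides of (\ref{calcul}) (and using $\delta(x)\approx 1-|x|^2$) yields the sharp two-sided estimate $G_B^\alpha(\delta^{-\lambda'})\approx\delta^{\alpha-\lambda'}$ for $\pa<\lambda'<1+\pa$; choosing $\max(\pa,\lambda)<\lambda'<1+\pa$ gives $h\leq G_B^\alpha(\delta^{-\lambda'})$ and $\delta^{1-\pa}h\leq c\,\delta^{1+\pa-\lambda'}\to0$. You instead estimate $h$ by bare hands — near-diagonal ball, dyadic annuli with the tubular volume bound $|\{y\in B:|y-x|\leq r,\ \delta(y)\leq s\}|\leq C\min(r^N,sr^{N-1})$, and the far region via (\ref{green3}) — recovering the upper bound $c\,\delta^{\alpha-\lambda}$ (with a harmless logarithm when $\lambda\leq\pa$), which is all that (c) requires. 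What each buys: your argument is elementary and self-contained, avoids the hypergeometric machinery of Lemma \ref{fond1} entirely, and works verbatim in any bounded $C^{1,1}$ domain, whereas the paper's computation is tied to the explicit kernels of the ball; on the other hand, the paper's route produces the matching lower bound $G_B^\alpha(\delta^{-\lambda'})\geq c\,\delta^{\alpha-\lambda'}$, and since Lemma \ref{fond1} is needed anyway for the sub/supersolution construction in Theorem \ref{th2}, within the paper assertion (c) comes essentially for free. Your part (b) is also organized differently: the paper estimates $|h(x)-h(x_0)|$ directly, bounding the local piece on $B(x_0,2r)$ via (\ref{green1}) and treating the complement by dominated convergence via (\ref{green3}), while you approximate $h$ uniformly on compacts by the continuous potentials $G_B^\alpha\bigl(1_{\{\delta\geq\eta\}}\delta^{-\lambda}\bigr)$ of bounded densities, using only the stated mapping property $G_B^\alpha:\cc{B}_b(B)\to\cc{C}_b(B)$ — arguably cleaner. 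Part (a) coincides with the paper's proof (both rest on (\ref{green5}) and $\lambda<1+\pa$). One typo-level slip: your quoted per-annulus bound $c(2^k\delta(x))^{-(\lambda-\pa)}$ omits the prefactor $\delta(x)^{\pa}$ coming from (\ref{green1}), but your summed bound $c\,\delta(x)^{\alpha-\lambda}$ includes it correctly, so nothing is lost.
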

\begin{proof} We prove all the assertions step by step.
\benu
\item[(a)] Let $x\in B$ and let $r>0$ be  small enough so that $B(x,r)\subset B.$ Using (\ref{green5}) and then splitting integral into two pieces we obtain
\begin{displaymath}
\begin{split}
\int_BG_B^\alpha(x,y)\delta(y)^{-\lambda}\,dy&\leq c\int_B \frac{\delta(y)^{\pa-\lambda}}{\babsolu x-y\eabsolu^{N-\pa}}\,dy\\
&=c \int_{B(x,r)}\frac{\delta(y)^{\pa-\lambda}}{\babsolu x-y\eabsolu^{N-\pa}}\,dy+ c \int_{B\sla B(x,r)}\frac{\delta(y)^{\pa-\lambda}}{\babsolu x-y\eabsolu^{N-\pa}}\,dy.
\end{split}
\end{displaymath}
For $y\in B(x,r)$ it is clear that $\delta(y)\geq \delta(x)-r$ and hence
\[\int_{B(x,r)}\frac{\delta(y)^{\pa-\lambda}}{\babsolu x-y\eabsolu^{N-\pa}}\,dy\leq  w(x) \int_{B(x,r)}\frac{dy}{\babsolu x-y\eabsolu^{N-\pa}}=c\,r^{\frac{\alpha}{2}}\,w(x)<\infty,\]
where $w(x)=\bp \delta(x)-r\ep^{\pa-\lambda}$ if $\lambda>\pa,$ and $w(x)=1$ otherwise. Now, for every $y\in B\sla B(x,r)$, $\babsolu x-y\eabsolu>r$ and then, noting that $\delta(y)=1-\babsolu y\eabsolu,$ we have
\begin{displaymath}
\begin{split}
\int_{B\sla B(x,r)}\frac{\delta(y)^{\pa-\lambda}}{\babsolu x-y\eabsolu^{N-\pa}}\,dy&\leq \frac{1}{r^{N-\pa}}\int_B \frac{1}{(1-\babsolu y\eabsolu)^{\lambda-\pa}}\,dy\\
&\leq \frac{c}{r^{N-\pa}}\int_0^1\frac{t^{N-1}}{(1-t)^{\lambda-\pa}},
\end{split}
\end{displaymath}
which is finite since $\lambda<1+\pa $ and hence the first assertion holds.
\item[(b)] Let $x_0\in B$ and $r>0$ so that $B(x_0,2r)\subset B.$ Let  $x$ such that $\babsolu x-x_0\eabsolu<r.$ We break up $h$ into two pieces
\[h(x)=G_B^\alpha\bp1_{B(x_0,2r)}\delta^{-\lambda}\ep(x)+G_B^\alpha\bp 1_{B\sla B(x_0,2r)}\delta^{-\lambda}\ep(x)=:h_1(x)+h_2(x).\]
The approach is as follows. We check  that $\babsolu h_1(x)-h_1(x_0)\eabsolu$ can be made arbitrarily small with a suitably chosen $r>0$ and that $\babsolu h_2(x)-h_2(x_0)\eabsolu\to 0$ as $\babsolu x-x_0\eabsolu\to 0$ for this choose of $r.$  For every  $y\in B(x_0,2r)$, $\delta(y)\geq \delta(x_0)-2r$. Thus, by (\ref{green1}), we have
\[G_B^\alpha(x,y)\delta^{-\lambda}(y)\leq c \bp \delta(x_0)-2r\ep^{-\lambda}\frac{1}{\babsolu x-y\eabsolu^{N-\alpha}}.\]
 Then, recalling that $m(B(x_0,2r))=c(2r)^N,$  we obtain
 \begin{displaymath}
\begin{split}
&\babsolu h_1(x)-h_1(x_0)\eabsolu=\int_{B(x_0,2r)}\babsolu G_B^\alpha(x,y)-G_B^\alpha(x_0,y)\eabsolu \delta^{-\lambda}(y)dy\\
&\leq c \bp \delta(x_0)-2r\ep^{-\lambda}\bp \int_{B(x_0,2r)}\frac{dy}{\babsolu x-y\eabsolu^{N-\alpha}}+\int_{B(x_0,2r)}\frac{dy}{\babsolu x_0-y\eabsolu^{N-\alpha}}\ep\\
&\leq  c \bp \delta(x_0)-2r\ep^{-\lambda}\bp \int_{B(x,r)}\frac{dy}{\babsolu x-y\eabsolu^{N-\alpha}}+\int_{B(x_0,2r)\sla B(x,r)}\frac{dy}{\babsolu x-y\eabsolu^{N-\alpha}}+(2r)^\alpha\ep\\
&\leq c \bp \delta(x_0)-2r\ep^{-\lambda} \bp r^\alpha+\frac{r^N}{r^{N-\alpha}}+r^\alpha\ep\\
&=c \bp \delta(x_0)-2r\ep^{-\lambda}r^\alpha.
\end{split}
\end{displaymath}
It is clear that the last term is arbitrarily small for a careful choose of $r>0.$

Now, having chosen $r,$ let us turn to the second addend $h_2$.   For every $y\in B\sla B(x_0,2r)$ and every $x\in B(x_0,r)$, we have $\babsolu x-y\eabsolu>r$ and $\babsolu x_0-y\eabsolu>r.$  Then it follows from (\ref{green3}) that
 \begin{displaymath}
 \begin{split}
 \babsolu G_B^\alpha(x,y)-G_B^\alpha(x_0,y)\eabsolu\delta(y)^{-\lambda}&\leq c\frac{\delta(x)^{\pa}+\delta(x_0)^{\pa}}{r^N}\delta(y)^{-\lambda+\frac{\alpha}{2}}\\
 &\leq \frac{c}{r^N} \delta(y)^{-\lambda+\frac{\alpha}{2}}.
 \end{split}
 \end{displaymath}
 Noting that $\delta(y)=1-\babsolu y\eabsolu$ and that $\lambda<1+\frac{\alpha}{2},$ we get
 \[\int_{B\sla B(x_0,2r)} \delta(y)^{-\lambda+\frac{\alpha}{2}}dy\leq\int_B \delta(y)^{-\lambda+\frac{\alpha}{2}}dy=c\int_0^1\frac{t^{N-1}}{(1-t)^{\lambda-\frac{\alpha}{2}}}dt<\infty.\]
 Hence, by dominated convergence theorem,
 \begin{displaymath}
 \begin{split}
 \lim_{\babsolu x-x_0\eabsolu\to 0}&\babsolu h_2(x)-h_2(x_0)\eabsolu\\
 &\leq\lim_{\babsolu x-x_0\eabsolu\to 0}\int_{B\sla B(x_0,2r)}|G_B^\alpha(x,y)-G_B^\alpha(x_0,y)|\delta(y)^{-\lambda}dy=0.
 \end{split}
\end{displaymath}
 We can now   conclude that $h$ is continuous at $x_0.$
 \item[(c)]  For every $-\pa<\beta<1-\pa$, applying $G_B^\alpha$ on both sides of (\ref{calcul}) and noting that $\delta(x)\approx 1-\babsolu x\eabsolu^2 $ we immediately obtain
 \[G_B^\alpha(\delta^{-(\alpha+\beta)})\approx \delta^{-\beta}\quad\text{in }B,\]
 that is, for every $\pa<\lambda'<1+\pa$,
 \[G_B^\alpha(\delta^{-\lambda'})\approx \delta^{\alpha-\lambda'}\quad \text{in }B.\]
 Now choose $\lambda'$ such that $\max(\pa,\lambda)<\lambda'<1+\pa$. Then $h\leq G_B^\alpha(\delta^{-\lambda'})$ on $B$ and consequently, for every $z\in \partial B$,
 \[\lim_{x\to z}\delta(x)^{1-\pa}h(x)\leq\lim_{x\to z}\delta(x)^{1-\pa}G_B^\alpha(\delta^{-\lambda'})=\lim_{x\to z}\delta(x)^{1+\pa-\lambda'}=0.\]
 \eenu
\end{proof}

 \section{Moderate blow up solutions}

 For $\gamma>0,$ we  consider the following fractional equation
 \begin{equation}\label{ep1}
 \dal u=u^\gamma.
 \end{equation}
 Before we carry on, we make precise the notion of solution that we use in this paper. By a solution of the Eq. (\ref{ep1}) in a open set $U\subset\rn,$ we shall mean every real-valued nonnegative function $u\in\cc{C}(U)\cap\mathcal{L}_\alpha$ such that
\[\int_{\rn} u(x)\,\dal \varphi(x)\,dx=\int_{\rn} u^\gamma(x)\,\varphi(x)\,dx\]
holds for every nonnegative function $\varphi$ belonging to the space $\mathcal{C}_c^\infty(U)$.
We first quote from \cite{chrouda} the following lemma
which states a straightforward and useful fact.
\begin{lem}\label{l1}  Let $U$ be an open set and let $u\in \cc{C}(U)\cap\mathcal{L}_\alpha$.
 The function $u$ is a solution of $\dal u=u^\gamma$ in  $U$ if and only if $u+G_D^\alpha\bp u^\gamma\ep=H_D^\alpha u$ holds for every regular open set $D\subset \overline{D} \subset U.$
\end{lem}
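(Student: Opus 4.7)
The plan is to prove both implications simultaneously by examining the candidate error function
$$
w := u + G_D^\alpha(u^\gamma) - H_D^\alpha u
$$
on an arbitrary regular open set $D$ with $\overline{D}\subset U$, and then exploiting the distributional identity (\ref{operateurgreen}) together with Remark \ref{rmq2}.

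For the forward direction, assume $\dal u=u^\gamma$ in $U$. Since $u\in\cc{C}(U)$, its restriction to $\overline{D}$ is bounded, so $u^\gamma\in\cc{C}_b(D)$ and therefore $G_D^\alpha(u^\gamma)$ is a bounded continuous function on $D$ that vanishes on $D^c$. The condition $u\in\cc{L}_\alpha$ makes $H_D^\alpha u$ well defined, $\alpha$-harmonic in $D$, and coincident with $u$ on $D^c$. Consequently $w$ is continuous on $D$ and vanishes identically on $D^c$. Applying $\dal$ in the distributional sense on $D$, using (\ref{operateurgreen}) for $G_D^\alpha(u^\gamma)$ and the $\alpha$-harmonicity of $H_D^\alpha u$, gives
$$
\dal w = u^\gamma - u^\gamma - 0 = 0 \quad\text{in }\ \cc{D}'(D).
$$
By Remark \ref{rmq2}(2), $w$ is $\alpha$-harmonic in $D$. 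Applying the minimum principle of Remark \ref{rmq2}(3) to both $w$ and $-w$ (which are lower semicontinuous on $\overline{D}$ and vanish on $D^c$) forces $w\equiv 0$, yielding the announced identity. Alternatively, one may invoke directly the probabilistic representation $w(x)=E^x[w(X_{\tau_D})]=0$ since $X_{\tau_D}\in D^c$ almost surely.

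For the converse, pick any $\varphi\in\cc{C}_c^\infty(U)$ and select a regular open set $D$ (for instance a smooth ball) with $\mathrm{supp}\,\varphi\subset D$ and $\overline{D}\subset U$. The assumed identity reads $u=H_D^\alpha u - G_D^\alpha(u^\gamma)$, and this actually holds pointwise on all of $\rn$ because $H_D^\alpha u=u$ and $G_D^\alpha(u^\gamma)=0$ on $D^c$. Testing against $\dal\varphi$ and using the distributional identities $\dal H_D^\alpha u=0$ on $D$ and $\dal G_D^\alpha(u^\gamma)=-u^\gamma$ on $D$ (from (\ref{operateurgreen})), we obtain
$$
\int_{\rn} u\,\dal\varphi\,dx = \int_{\rn} H_D^\alpha u\,\dal\varphi\,dx - \int_{\rn} G_D^\alpha(u^\gamma)\,\dal\varphi\,dx = \int_{\rn} u^\gamma\,\varphi\,dx,
$$
which is exactly the distributional formulation of $\dal u=u^\gamma$ in $U$.

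The only delicate step is the uniqueness argument at the end of the forward direction, namely going from the distributional relation $\dal w=0$ on $D$ to the vanishing of $w$; this is taken care of by Remark \ref{rmq2}(2)-(3). Everything else is essentially bookkeeping: verifying that $u\in\cc{L}_\alpha$ and that $u$ is locally bounded in $U$ make each of $H_D^\alpha u$, $G_D^\alpha(u^\gamma)$, and the various distributional pairings perfectly meaningful.
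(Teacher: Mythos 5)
Your proposal is correct, but note that the paper itself contains no proof of this lemma: it is quoted verbatim from \cite{chrouda} ("We first quote from \cite{chrouda} the following lemma\dots"), so there is no in-paper argument to match against. That said, your forward direction is exactly the technique the authors themselves deploy inside the proof of Lemma \ref{fond1}, where they show $u+G_{B_t}(v)=H_{B_t}u$ by observing that $h:=u+G_{B_t}(v)$ is $\alpha$-harmonic in $B_t$ (via (\ref{operateurgreen})) and agrees with $u$ on $B_t^c$, then invoking uniqueness for the fractional Dirichlet problem; your version via the error function $w$ and the minimum principle of Remark \ref{rmq2} is the same argument with the uniqueness step made explicit. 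Your converse, testing the pointwise identity $u=H_D^\alpha u-G_D^\alpha(u^\gamma)$ on $\rn$ against $\dal\varphi$ for a ball $D$ containing $\mathrm{supp}\,\varphi$, is the natural and correct way to recover the distributional equation, and you rightly note that the identity extends trivially to $D^c$.

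Two small points deserve a remark, though neither is a genuine gap. First, to apply the minimum principle on $\overline{D}$ you need $w$ lower semicontinuous up to $\partial D$, i.e.\ boundary continuity of $H_D^\alpha u$ at regular boundary points for data $u$ that is merely in $\cc{C}(U)\cap\mathcal{L}_\alpha$ (possibly unbounded far away); the paper only states the Dirichlet-problem property for $f\in\cc{C}_b(D^c)$, so this step needs the standard extension (continuity of $u$ near $\overline{D}$ plus the decay of the Poisson kernel suffices), or else your alternative probabilistic one-liner $w(x)=E^x[w(X_{\tau_D})]$, which requires passing from harmonicity in $D$ to the exit distribution of $D$ itself via an exhaustion $D_n\uparrow D$ and bounded convergence. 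Second, to invoke Remark \ref{rmq2}(2) you should check $w\in\mathcal{L}_\alpha$, which holds since $G_D^\alpha(u^\gamma)$ is bounded with support in $\overline{D}$ and $H_D^\alpha u$ coincides with $u\in\mathcal{L}_\alpha$ off $D$ and is bounded on $D$. With these routine verifications spelled out, your proof is complete.
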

\begin{lem}\label{comparison}  Let $D$ be an open regular bounded domain in $\rn.$ Let
$u,v\in \cc{C}(\overline{D})\cap \cc{L}_\alpha$ such that
$$
\dal u\leq u^\gamma\qquad \mbox{and} \qquad \dal v\geq v^\gamma\qquad \mbox{in}\ D.
$$
If $u\geq v$ on $ D^c,$ then $u\geq v$ in $\rn.$
\end{lem}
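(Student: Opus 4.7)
The plan is to argue by contradiction: set $w:=u-v$ and suppose the open set $A:=\{x\in\rn:u(x)<v(x)\}$ is nonempty. Since $u,v\in\cc{C}(\overline{D})$ and $u\geq v$ on $D^c$, one checks that $A\subset D$ is open, $w$ is continuous on $\overline{A}$, and $w\geq 0$ on $A^c$. The goal is to show that $w$ is $\alpha$-superharmonic in $A$, so that the minimum principle of Remark \ref{rmq2}(3) forces $w\geq 0$ on $A$, contradicting the very definition of $A$.

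The key step is to upgrade the distributional sub/super-solution inequalities into the integral representations
\be\label{P:reps} u+G_U^\alpha(u^\gamma)\geq H_U^\alpha u,\qquad v+G_U^\alpha(v^\gamma)\leq H_U^\alpha v\qquad\text{in }U, \ee
valid for every regular bounded open set $U$ with $\overline{U}\subset D$. This is the one-sided analogue of Lemma \ref{l1}: the function $\phi:=u+G_U^\alpha(u^\gamma)-H_U^\alpha u$ is continuous on $\overline{U}$, vanishes on $U^c$ (by the boundary behaviour of $G_U^\alpha$ and since $H_U^\alpha u=u$ off $U$), and satisfies $\dal\phi=\dal u-u^\gamma\leq 0$ in $U$ as a distribution by \eqref{operateurgreen} and the $\alpha$-harmonicity of $H_U^\alpha u$; Remark \ref{rmq2}(3) then forces $\phi\geq 0$. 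The inequality for $v$ is obtained symmetrically from the $\alpha$-subharmonic counterpart.

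Subtracting the two inequalities in \eqref{P:reps} and using the linearity of $H_U^\alpha$ yields
\[ w(x)\geq H_U^\alpha w(x)+G_U^\alpha(v^\gamma-u^\gamma)(x),\qquad x\in U. \]
Specialising to any ball $U$ with $\overline{U}\subset A$, one has $0\leq u<v$ on $U$, so $v^\gamma-u^\gamma>0$ on $U$ (using $\gamma>0$ and the implicit nonnegativity of $u,v$) and hence $G_U^\alpha(v^\gamma-u^\gamma)\geq 0$ in $U$. Consequently $w(x)\geq H_U^\alpha w(x)=E^x[w(X_{\tau_U})]$ for every such $x$ and $U$, which shows that $w$ is $\alpha$-superharmonic in $A$. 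Since $w\geq 0$ on $A^c$, Remark \ref{rmq2}(3) applied to the bounded open set $A$ gives $w\geq 0$ on $A$, contradicting the definition of $A$. Hence $A=\emptyset$ and $u\geq v$ on $\rn$.

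The main hurdle is \eqref{P:reps}: one must carefully verify that the one-sided distributional inequalities $\dal u\leq u^\gamma$ and $\dal v\geq v^\gamma$ translate into the pointwise statements required by the minimum principle. The reasoning mirrors the proof of Lemma \ref{l1} with equalities replaced by the corresponding inequalities and requires no new analytic ingredient, but some care is needed to ensure that $\phi$ (and its $v$-analogue) is truly $\alpha$-superharmonic/$\alpha$-subharmonic in the sense of Definition 2.1, so that Remark \ref{rmq2}(3) is applicable.
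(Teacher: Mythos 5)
Your proof is correct and its skeleton is exactly the paper's: argue by contradiction on the open set $A=\{u<v\}\subset D$, show that $w=u-v$ is $\alpha$-superharmonic there, and conclude $w\geq 0$ on $A$ by the minimum principle of Remark \ref{rmq2}. The paper does this in three lines: on $A$ one has $\dal w=\dal u-\dal v\leq u^\gamma-v^\gamma\leq 0$ in the distributional sense (monotonicity of $t\mapsto t^\gamma$ on $[0,\infty[$ and the implicit nonnegativity of $u,v$), it asserts that this ``amounts to saying'' $w$ is $\alpha$-superharmonic in $A$, and it applies the minimum principle --- noting, as you do somewhat more carefully than the paper, that $w\geq 0$ holds on all of $A^c$, since $w\geq 0$ on $D\setminus A$ by the very definition of $A$ and on $D^c$ by hypothesis. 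Where your proposal genuinely differs is the detour through the one-sided representations $u+G_U^\alpha(u^\gamma)\geq H_U^\alpha u$ and $v+G_U^\alpha(v^\gamma)\leq H_U^\alpha v$, and here I must push back: this detour is circular as a means of certifying superharmonicity. To prove the first inequality you apply Remark \ref{rmq2}(3) to $\phi:=u+G_U^\alpha(u^\gamma)-H_U^\alpha u$ knowing only that $\phi$ is continuous and satisfies $\dal\phi\leq 0$ distributionally in $U$ --- which is precisely the same state of knowledge you have about $w$ on $A$, and precisely the distributional-to-Definition-2.1 upgrade that you flag at the end of your proposal as the remaining hurdle. If one grants that continuous distributional supersolutions obey the minimum principle (this is the standard $\alpha$-superharmonic analogue of the equivalence in Remark \ref{rmq2}(2); see \cite{bogdanby} and \cite{silvestre}, and it is what the paper's own terse proof tacitly invokes), then the minimum principle applies directly to $w$ on $A$ and your intermediate representations are superfluous; if one does not grant it, your derivation of those representations suffers from the very gap the detour was meant to close. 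So, modulo that standard equivalence --- which the paper likewise leaves implicit --- your argument is sound, and deleting the detour collapses it exactly to the paper's proof.
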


 \begin{proof}  Define $w=u-v.$ Arguing by contradiction, wa assume    that the open set
$$
U=\left\{x\in D;w(x)<0\right\}
$$
 is not empty.  We get immediately  that  $\dal w\leq \bp u^\gamma-v^\gamma\ep\leq 0$
 in~$U$, this amounts to say that  $w$ is $\alpha$-superharmonic in~$U.$ Furthermore,  we have $w\geq 0$ on $D^c$. Therefore, $w\geq 0$ in $U$ by the minimum principle  for $\alpha$-superharmonic functions (Remark \ref{rmq2} (c))    and this yields a contradiction. Consequently, $U=\emptyset$ and hence  $u\geq v$ in $D.$
\end{proof}

It should be noticed that the comparison principle as stated in the above lemma become more or less a classical result and it is widely used in the literature ( \cite{felmerlarge,chrouda}...). Also, as we alluded to before, the minimum principle and comparison results require information on the solutions in the whole complement of the domain and not only  at the boundary, consistently with the nonlocal character of the operator $\dal$.

Let $D$ be an open bounded $C^{1,1}$ domain in $\rn.$ It was shown in \cite{chrouda} that, for every $f\in\cc{C}_b^+(D^c),$ there exists one and only one function  $u\in\cc{C}^+(\overline{D})$ solution of the following semilinear Dirichlet problem
\begin{equation}\label{dirichlet}
\bacc\ba{rcl}\dal u&=&u^\gamma \text{ in } D.\\
u&=&f\text{ on } D^c\ea\right.
\end{equation}
As a matter of fact, the continuity of $f$ in the whole complement of $D$ is not vital for the existence of a solution $u\in\cc{C}(\overline{D})$ but it guarantees rather the continuity of the solution in the whole space $\rn$ and this is not important for our purposes. In this context, we need only to assume that $f\in \cc{C}(\partial D)\cap \cc{B}^+_b(D^c)$ rather than $f\in\cc{C}_b^+(D^c)$. In the following proposition, we would like to prove the existence of a solution $u\in\cc{C}^+(\overline{D})$ to the same problem but dropping the boundedness of the boundary datum $f,$ thus extending the result in \cite{chrouda}.
\begin{pro}\label{thm81}  For every nonnegative function $f\in \cc{C}(\partial D)\cap \mathcal{L}_\alpha$, there exists one and only one function   $u\in\cc{C}^+(\overline{D})$  solution of the
problem (\ref{dirichlet}).
 Moreover, for every $x\in\rn,$
$$
u(x)+G_D^\alpha(u^\gamma)(x)=H_D^\alpha f(x).
$$
\end{pro}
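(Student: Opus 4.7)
The plan is to reduce to the bounded-data case already treated in \cite{chrouda} by a monotone truncation of $f$, and then to pass to the limit via the Green/Poisson identity. The core technical points will be the uniform bound $u_n \leq H_D^\alpha f$ coming from $f \in \mathcal{L}_\alpha$, and verifying continuity of the monotone limit up to $\partial D$.

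First I would set $f_n := \min(f, n)$. Each $f_n$ belongs to $\cc{C}(\partial D) \cap \cc{B}_b^+(D^c)$, so the result from \cite{chrouda} (in the form recalled just before the statement) produces a unique $u_n \in \cc{C}^+(\overline{D})$ satisfying $\dal u_n = u_n^\gamma$ in $D$, $u_n = f_n$ on $D^c$, together with
\[
u_n + G_D^\alpha(u_n^\gamma) = H_D^\alpha f_n \quad \text{on } \rn.
\]
Since $f_n \leq f_{n+1}$, Lemma \ref{comparison} yields $u_n \leq u_{n+1}$; set $u := \sup_n u_n$. The identity above gives $u_n \leq H_D^\alpha f_n \leq H_D^\alpha f$, and $H_D^\alpha f$ is finite on $D$ because $f \in \mathcal{L}_\alpha$. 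So $u$ is finite on $D$ and $u = f$ on $D^c$. Monotone convergence in the identity then gives $u + G_D^\alpha(u^\gamma) = H_D^\alpha f$ on $\rn$, and applying $\dal$ distributionally on $D$, using (\ref{operateurgreen}) together with the $\alpha$-harmonicity of $H_D^\alpha f$, yields $\dal u = u^\gamma$ in $D$.

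The main obstacle is to show $u \in \cc{C}(\overline{D})$, since $f$ need not be bounded on $D^c \sla \partial D$. On $D$, $H_D^\alpha f$ is $\alpha$-harmonic hence continuous, and $u^\gamma \leq (H_D^\alpha f)^\gamma$ is locally bounded on $D$, so a standard interior argument using the local integrability of $G_D^\alpha(x,\cdot)$ makes $G_D^\alpha(u^\gamma)$ continuous on $D$; hence $u = H_D^\alpha f - G_D^\alpha(u^\gamma)$ is continuous on $D$. At $z \in \partial D$, the continuity of $f$ on $\partial D$ makes $f$ bounded on $B(z,r) \cap D^c$ for small $r$; splitting $H_D^\alpha f$ according to $B(z,r)$ and its complement and using the decay $K_D(x,y) \lesssim \delta(x)^{\alpha/2}$ for $y$ bounded away from $z$ shows $H_D^\alpha f(x) \to f(z)$ as $x \to z$. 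The same local boundedness gives $G_D^\alpha(u^\gamma)(x) \to 0$ at the regular point $z$, so $u(x) \to f(z)$ from inside $D$; from $D^c$, $u = f$ is continuous at every point of $\partial D$. Finally, uniqueness is immediate: two solutions $u_1, u_2 \in \cc{C}(\overline{D}) \cap \mathcal{L}_\alpha$ coincide on $D^c$, so Lemma \ref{comparison} applied symmetrically gives $u_1 = u_2$.
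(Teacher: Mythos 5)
Your overall route is exactly the paper's: truncate $f_n=\min(f,n)$, invoke the bounded-data result of \cite{chrouda} to get $u_n\in\cc{C}^+(\overline{D})$ with $u_n+G_D^\alpha(u_n^\gamma)=H_D^\alpha f_n$, use Lemma \ref{comparison} for monotonicity, pass to the limit in the identity, recover $\dal u=u^\gamma$ via (\ref{operateurgreen}), and settle uniqueness again by Lemma \ref{comparison}. The skeleton matches the paper step for step; the only divergence is in how continuity of $u$ up to $\partial D$ is justified, and there your argument has a flaw as written.

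You assert that \emph{local} boundedness of $u^\gamma\leq (H_D^\alpha f)^\gamma$ on $D$ gives $G_D^\alpha(u^\gamma)(x)\to 0$ at a regular boundary point $z$. That inference is false in general: as shown in the proof of Lemma \ref{continuity}(c), $G_B^\alpha(\delta^{-\lambda'})\approx \delta^{\alpha-\lambda'}$ for $\pa<\lambda'<1+\pa$, so for $\alpha<\lambda'<1+\pa$ the density $\delta^{-\lambda'}$ is locally bounded in $B$ while its Green potential \emph{blows up} at the boundary. What is needed, and what the paper uses, is a \emph{global} bound: $u\leq H_D^\alpha f$ with $\sup_D H_D^\alpha f<\infty$ puts $u^\gamma\in\cc{B}_b(D)$, whence $G_D^\alpha(u^\gamma)\in\cc{C}_0(D)$ by the standard property of the Green operator on regular domains recalled in the preliminaries, and then $u=H_D^\alpha f-G_D^\alpha(u^\gamma)$ gives $u\in\cc{C}(\overline{D})$ in one stroke, with no separate interior analysis. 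The repair is in fact already contained in your proposal: your splitting of the Poisson integral (near part controlled by continuity of $f$ at points of $\partial D$, far part by the $\mathcal{L}_\alpha$ tail of $f$ and the decay of $K_D(x,\cdot)$) shows $H_D^\alpha f(x)\to f(z)$ for every $z\in\partial D$, hence $H_D^\alpha f$ extends continuously to the compact set $\overline{D}$ and is therefore bounded on $D$ --- note this boundedness is also the point where mere finiteness of $H_D^\alpha f$, which is all you claim from $f\in\mathcal{L}_\alpha$, is not enough, and where the continuity of $f$ on $\partial D$ genuinely enters. Once ``local boundedness'' is replaced by this global bound, your proof closes and coincides with the paper's.
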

\begin{proof}   For every $n\geq1$, put $f_n=\min(f,n)$. Obviously, $f_n\in \cc{C}(\partial D)\cap \cc{B}^+_b(D^c)$. Then, as mentioned above, there exists a  nonnegative function $u_n\in \cc{C}^+(\overline{D})$ solution to the problem (\ref{dirichlet}) with boundary datum $f_n$ instead of $f.$ Furthermore, by Lemma \ref{l1},
\begin{equation}\label{a1}
u_n(x)+G^\alpha_D(u_n^\gamma)(x)=H_D^\alpha f_n(x)\quad;\quad x\in\rn.
\end{equation}
On account of  the comparison principle,  $(u_n)$ is nondecreasing. If we put $u:=\sup _n u_n$ then obviously, $u\geq0$. On the other hand, letting $n$ tend to $\infty$ in (\ref{a1}), we obtain
\begin{equation}\label{relation}
u(x)+G_D^\alpha(u^\gamma)(x)=H_D^\alpha f(x)\quad ;\quad x\in\rn.
\end{equation}
 We  readily observe that $u$ is bounded on $D$ by $\sup_{x\in D}H_D^\alpha f(x)$ and thereby $u^\gamma$ is also bounded on $D$. This yields that $G_D^\alpha(u^\gamma)\in\cc{C}_0(D)$, the subclass of $\cc{C}(\overline{D})$ consisting of  functions which vanish continuously on $\partial D.$  Then, from (\ref{relation}), we immediately  deduce that   $u\in \cc{C}(\overline{D})$ and that $u=f$ on $D^c$. Using (\ref{operateurgreen}), we also deduce from (\ref{relation})  that $\dal u=u^\gamma$ in $D$.  Hence,   $u$ is a solution to the problem (\ref{dirichlet}) as desired. The uniqueness of the solution follows immediately from Lemma~\ref{comparison}.
\end{proof}

Now we wish to spend a few words on the Martin kernel. Let $D$ be an open bounded $C^{1,1}$ domain in $\rn$ and let $x_0\in D$ be a reference point. The Martin kernel of the killed symmetric stable process is defined by
\[M_D^\alpha(x,z)=\lim_{y\to z}\frac{G_D^\alpha(x,y)}{G_D^\alpha(x_0,y)},\quad\text{for }x\in D\ \text{and } z\in\partial D,\]
and it has properties similar to those of the Martin kernel with respect to the killed Brownian motion \cite{bogdan1999}: the mapping $(x,z)\associe M_D^\alpha(x,z)$ is continuous on $D\times\partial D.$ Moreover, for every $z\in\partial D,$ $M_D^\alpha(\cdot,z)$ is a positive singular $\alpha$-harmonic in $D$ and for every $z,w\in\partial D$ such that $z\neq w$ we have $\lim_{x\to w}M_D^\alpha(x,z)=0.$ It is proved also that the formula
\[h(x)=\int_{\partial D}M_D^\alpha(x,z)\,d\nu(z)\]
realizes a one-to-one correspondence between nonnegative singular $\alpha$-harmonic functions on $D$ and positive Radon measures on $\partial D.$
 The Martin kernel of the unit ball $B$ is given explicitly by
$$
M_B^\alpha(x,y)=\frac{(1-|x|^2)^{\alpha/2}}{|x-y|^N}\quad;\quad x\in B,\, z\in\partial B.
$$
For every nonnegative continuous function $g$ on $\partial B$, we denote
$$
M_B^\alpha g(x):= \left\{
            \begin{array}{ll}
\displaystyle{\int_{\partial B} M_B^\alpha(x,y)g(y) d\sigma(y)}& \;\;\textrm{ if }\, x\in B\\
0 &  \;\;\textrm{ if }\, x\notin B,
 \end{array}
 \right.
$$
where $\sigma$ is the surface area measure on $\partial B$. An elementary calculation leads to the following estimates.
\begin{equation}\label{kim2}
M^\alpha_B1(x)\approx \delta(x)^{\pa-1}
\end{equation}
 Furthermore, it follows from \cite[Theorem 3.18]{kim2006} that $M_B^\alpha g$ is the unique positive solution $h$ of the following boundary value problem:

\begin{equation}\label{kimproblem}
\left\{
\begin{array}{ll}
\dal h=0 \;\textrm{ in }\, B\\
h=0 \;\textrm{ on }\, B^c\\
\displaystyle{\lim_{x\to z\in\partial B}\delta(x)^{1-\pa} h(x)=g(z)}.
 \end{array}
 \right.
\end{equation}

For $g\in \cc{C}^+(\partial B)$, we shall investigate the existence and the uniqueness of a solution to the  following problem:
\begin{equation}\label{p1}
\bacc\ba{l}\dal u=u^\gamma\text{ in } B\\
u=0 \;\textrm{ on }\, B^c\\
\displaystyle{\lim_{x\to z\in\partial B}\delta(x)^{1-\frac{\alpha}{2}}u(x)=g(z).}\ea\right.
\end{equation}
Clearly, solutions of this problem explode at the boundary. The explosion is driven by the function $g$ and the explosion rate has to be controlled by $\delta(x)^{1-\pa}.$

To put on readiness, we shall  give a useful characterization of solutions to problem (\ref{p1}) in terms of the Green operator and the Martin kernel. This characterization will be the key ingredient to prove the uniqueness as it is stated in the  forthcoming main theorem.
\begin{pro}\label{unicity} Let $0<\gamma<\frac{2+\alpha}{2-\alpha}.$ A function $u\in \cc{B}^+(\rn)$  is a  solution to the problem (\ref{p1}) if and only if $u+G_B^\alpha u^\gamma=M_B^\alpha g$ on $\rn$.
\end{pro}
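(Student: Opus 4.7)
The plan is to reduce both implications to Lemma \ref{continuity} and to the uniqueness of positive solutions to the linear problem (\ref{kimproblem}). The crucial observation is that the condition $\gamma < \frac{2+\alpha}{2-\alpha}$ is equivalent to $\lambda := \gamma(1-\pa) < 1+\pa$, which is exactly the hypothesis of Lemma \ref{continuity}. In both directions I will establish $u \leq c\,\delta^{\pa-1}$ on $B$, from which $u^\gamma \leq c\,\delta^{-\lambda}$, and then let Lemma \ref{continuity} supply all the information needed on $G_B^\alpha u^\gamma$.

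For the $(\Leftarrow)$ direction, I assume $u + G_B^\alpha u^\gamma = M_B^\alpha g$ on $\rn$. Since $G_B^\alpha u^\gamma \geq 0$, monotonicity of the Martin kernel gives $u \leq M_B^\alpha g \leq \|g\|\, M_B^\alpha 1 \leq c\,\delta^{\pa-1}$ by (\ref{kim2}), so $u^\gamma \leq c\,\delta^{-\lambda}$. Lemma \ref{continuity} then yields that $G_B^\alpha u^\gamma$ is finite and continuous on $B$ with $\delta^{1-\pa}G_B^\alpha u^\gamma \to 0$ at $\partial B$. The three conditions of (\ref{p1}) now follow by writing $u = M_B^\alpha g - G_B^\alpha u^\gamma$: continuity on $B$ and vanishing on $B^c$ are clear (both summands vanish off $B$); the identity $\dal u = u^\gamma$ in $B$ holds distributionally because $\dal M_B^\alpha g = 0$ in $B$ and $\dal G_B^\alpha u^\gamma = -u^\gamma$ by (\ref{operateurgreen}); the boundary explosion follows from $\delta^{1-\pa}u = \delta^{1-\pa}M_B^\alpha g - \delta^{1-\pa}G_B^\alpha u^\gamma \to g - 0 = g$.

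For the $(\Rightarrow)$ direction, suppose $u$ solves (\ref{p1}). Using the limit condition together with continuity of $g$ on the compact set $\partial B$, I first extract $r_0>0$ and a constant $C_1$ such that $u \leq C_1\,\delta^{\pa-1}$ on $\{0<\delta<r_0\}$; on the complementary compact subset of $B$, $u$ is bounded by continuity and the same bound holds after possibly enlarging the constant (since $\delta^{1-\pa}\leq 1$), yielding the global estimate $u \leq C\,\delta^{\pa-1}$ on $B$. Hence $u^\gamma \leq C^\gamma \delta^{-\lambda}$ and Lemma \ref{continuity} delivers the same three properties of $G_B^\alpha u^\gamma$ as before. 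I then set $h := u + G_B^\alpha u^\gamma$ and check that $h$ is a positive solution of (\ref{kimproblem}): $h \in \cc{C}^+(B)$, $h=0$ on $B^c$, $\dal h = u^\gamma + (-u^\gamma) = 0$ in $B$ distributionally by (\ref{operateurgreen}), and $\delta^{1-\pa}h \to g$ at $\partial B$ by adding the two limits. The uniqueness of positive solutions to (\ref{kimproblem}) quoted from the Kim reference then forces $h = M_B^\alpha g$, which is the desired identity on all of $\rn$.

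The delicate step is producing the a priori bound $u \leq c\,\delta^{\pa-1}$: in the $(\Leftarrow)$ direction it comes for free from the Martin kernel upper bound (\ref{kim2}), while in the $(\Rightarrow)$ direction it has to be extracted from the boundary condition itself. In both cases, the exponent of the resulting dominating function $\delta^{-\lambda}$ falls within the admissibility range of Lemma \ref{continuity} precisely when $\gamma < \frac{2+\alpha}{2-\alpha}$, so that the argument would break down exactly at the stated threshold; the remaining manipulations are routine bookkeeping with the Green and Martin kernels.
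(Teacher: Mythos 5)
Your overall architecture is the paper's own: the same a priori bound $u\leq C\,\delta^{\pa-1}$ (obtained in the forward direction exactly as in the paper, by continuously extending $\delta^{1-\pa}u$ to $\overline{B}$), the same choice $\lambda=\gamma(1-\pa)<1+\pa$ feeding Lemma \ref{continuity}, and in the $(\Rightarrow)$ direction the same reduction — set $h=u+G_B^\alpha(u^\gamma)$, check via (\ref{operateurgreen}) and parts (a), (c) of the lemma that $h$ solves (\ref{kimproblem}), and conclude $h=M_B^\alpha g$ from the uniqueness statement of \cite{kim2006}. That direction is essentially identical to the paper's and is sound.

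The genuine gap is in the $(\Leftarrow)$ direction, at the step ``Lemma \ref{continuity} then yields that $G_B^\alpha u^\gamma$ is finite \emph{and continuous} on $B$.'' Part (b) of the lemma asserts continuity only of the specific potential $G_B^\alpha(\delta^{-\lambda})$; it says nothing about $G_B^\alpha f$ for a general Borel density $f$ with $0\leq f\leq c\,\delta^{-\lambda}$. Domination transfers finiteness and the boundary decay $\delta^{1-\pa}G_B^\alpha(u^\gamma)\to 0$, because those are one-sided estimates, but continuity is not monotone in the density — and in this direction $u$ is only assumed to lie in $\cc{B}^+(\rn)$, so $u^\gamma$ is merely Borel. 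Continuity of $G_B^\alpha(u^\gamma)$ is precisely what makes $u=M_B^\alpha g-G_B^\alpha(u^\gamma)$ continuous on $B$, i.e.\ a solution in the paper's distributional sense, so it cannot be waved through. The paper closes this with a separate semicontinuity argument: setting $v:=\bnorme g\enorme^\gamma\,\delta^{\gamma(\pa-1)}-u^\gamma\geq 0$ (constants adjusted so $v\geq0$), one has $G_B^\alpha(u^\gamma)+G_B^\alpha v=\bnorme g\enorme^\gamma\, G_B^\alpha\bp\delta^{\gamma(\pa-1)}\ep$ on $B$; the right-hand side is continuous by part (b), both summands on the left are lower semicontinuous (Fatou), and a lower semicontinuous function whose sum with another lower semicontinuous function is continuous is itself continuous (it is simultaneously upper semicontinuous, being a continuous function minus an lsc one). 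You must insert this argument — or observe that the proof of Lemma \ref{continuity}(b) uses only the \emph{upper} bound $\delta(y)^{-\lambda}$ on the density and hence extends verbatim to dominated densities — for your backward implication to be complete. (The same remark quietly underwrites the continuity of $h$ needed to invoke Kim's uniqueness in the forward direction, though there $u$ itself is continuous by hypothesis.)
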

\begin{proof} Let $u\in \cc{B}^+(\rn)$. Suppose first that  $u$  is a  solution to the problem (\ref{p1})  and  consider $w:=u+G_B^\alpha  (u^\gamma)$.
Since $M_B^\alpha g$ is the unique solution of the problem (\ref{kimproblem}), it will be sufficient to show that $w$ is also a solution of this problem. Taking in account that, for every $z\in \partial B$, $\lim_{x\to z}\delta^{1-\frac\alpha2}(x)u(x)=g(z)$, we can extend $\delta^{1-\frac\alpha2}u$ continuously on $\overline{B}$  and consequently there exists some constant $C>0$ such that $u\leq C\delta^{\frac\alpha2-1}$ on $B$.
 So, for every $x\in B$,
$$
G_B^\alpha(u^\gamma)(x)\leq C^\gamma G_B^\alpha\left(\delta^{\gamma(\pa-1)}\right)(x).
$$
Then, since $\gamma(1-\pa)<1+\pa$, it follows from assertion (a) in Lemma \ref{continuity} that $G_B^\alpha(u^\gamma)(x)<\infty$ which yields, using (\ref{operateurgreen}), that $\dal G_B^\alpha(u^\gamma)=-u^\gamma$ in $B$. Thus $\dal w=0$ in $B$.
Also, we deduce using assertion (c) in Lemma \ref{continuity} that  $\lim_{x\to z} \delta(x)^{1-\pa}G_B^\alpha(u^\gamma)(x)=0$ and  hence
$$
\lim_{x\to z} \delta(x)^{1-\pa}w(x)=\lim_{x\to z} \delta(x)^{1-\pa}u(x)=g(z).
$$

Conversely, suppose that $u$ satisfies $u+G_B^\alpha (u^\gamma)=M_B^\alpha g$ on $\rn$. Obviously, $\dal u=u^\gamma$ in $B$, $u=0$ on $B^c$ and  $ u\leq \bnorme g\enorme M_B^\alpha 1\leq c \bnorme g\enorme\delta^{\pa-1}.$ Then, in view of the  assertion (c) in Lemma \ref{continuity}, we obtain
$$
\lim_{x\to z}\delta(x)^{1-\pa}G_B^\alpha(u^\gamma)(x)\leq (c\|g\|)^\gamma\lim_{x\to z} \delta(x)^{1-\pa}G_B^\alpha\bp \delta^{\gamma(\frac{\alpha}{2}-1)}\ep(x)=0.
$$
This entails that
\[\lim_{x\to z}\delta(x)^{1-\pa}u(x)=\lim_{x\to z}\delta(x)^{1-\pa} M_B^\alpha g(x)=g(z).\]
Now it remains to prove that $u$ is continuous in $B.$
Define $v(x)=\bnorme g\enorme\bp\delta(x)\ep^{\gamma(\frac{\alpha}{2}-1)}-u^\gamma(x)$ for $x\in B.$ Obviously, $v$ is nonnegative and
\[G_B^\alpha(u^\gamma)+G_B^\alpha v=\bnorme g\enorme G_B^\alpha\bp \delta^{\gamma(\frac{\alpha}{2}-1)}\ep\;\;\textrm{ on }\; B.\]
By the assertion (b) in Lemma \ref{continuity}, $G_B^\alpha\bp \delta^{\gamma(\frac{\alpha}{2}-1)}\ep\in \cc{C}(B)$. Then, seeing  that $G_B^\alpha(u^\gamma)$ and $G_B^\alpha v$ are lower semi-continuous on $B$, we immediately conclude that $G_B^\alpha(u^\gamma)\in \cc{C}(B)$
 and hence the continuity of $u$ on $B$ holds since $u+G_B^\alpha (u^\gamma)=M_B^\alpha g$ and $M_B^\alpha g\in \cc{C}(B)$. Consequently, $u$ is a solution to the problem (\ref{p1}) and the proof is finished.
\end{proof}
\begin{thm} Let  $\gamma>0$ and let $g\in\cc{C}^+(\partial D)$ not identically vanishing.
\benu  \item If $\;0<\gamma<\frac{2+\alpha}{2-\alpha},$ then problem (\ref{p1}) admits one and only one nonnegative solution.
\item If $\;\gamma\geq\frac{2+\alpha}{2-\alpha}$, then problem (\ref{p1}) has no nonnegative solution.
\eenu
\end{thm}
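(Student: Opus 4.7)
The plan is to reduce both parts to the integral equation $u+G_B^\alpha u^\gamma=M_B^\alpha g$ via Proposition \ref{unicity}, to establish existence in case (1) by a Schauder-type fixed point argument, uniqueness by comparison, and non-existence in case (2) by showing that the prescribed boundary singularity forces $G_B^\alpha u^\gamma$ to diverge.

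\textbf{Existence for $0<\gamma<\frac{2+\alpha}{2-\alpha}$.} Set $v:=M_B^\alpha g$, so by (\ref{kim2}), $v\leq c\|g\|\,\delta^{\alpha/2-1}$. Any nonnegative solution of the integral equation automatically satisfies $0\leq u\leq v$; conversely, on this range $u^\gamma\leq c\,\delta^{\gamma(\alpha/2-1)}$, and the condition $\gamma(1-\alpha/2)<1+\alpha/2$ is equivalent to $\gamma<\frac{2+\alpha}{2-\alpha}$, bringing us exactly into the scope of Lemma \ref{continuity}. I would apply the Schauder--Tychonoff theorem to the convex set $K=\{u\in\mathcal{C}(B):0\leq u\leq v\}$ with the topology of uniform convergence on compact subsets of $B$, and to the operator $T(u)=(v-G_B^\alpha u^\gamma)_+$. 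The operator sends $K$ into $K$; its continuity and the relative compactness of $T(K)$ follow from Lemma \ref{continuity}(b) via an Arzel\`a--Ascoli argument (uniform $\delta^{-\lambda}$ bounds on compacts). A fixed point then satisfies the integral equation (one checks that the positive-part truncation is inactive at the fixed point, e.g.\ by comparing with the trivial supersolution $v$), and Lemma \ref{continuity}(c) guarantees the correct singular boundary datum, so Proposition \ref{unicity} yields the solution of (\ref{p1}).

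\textbf{Uniqueness.} If $u_1,u_2$ are two solutions, Proposition \ref{unicity} gives $u_1-u_2+G_B^\alpha(u_1^\gamma-u_2^\gamma)=0$ on $\mathbb{R}^N$, with vanishing trace on $B^c$. On the open set $\{u_1>u_2\}\subset B$, the function $u_1-u_2$ is $\alpha$-subharmonic since its fractional Laplacian equals $u_1^\gamma-u_2^\gamma\geq 0$ there, while the matching boundary behaviour $\delta^{1-\alpha/2}(u_1-u_2)\to 0$ at $\partial B$ allows one to invoke the minimum principle (Remark \ref{rmq2}(c)) on shrinking subdomains $B_r\uparrow B$ to force $\{u_1>u_2\}=\varnothing$; symmetrically $u_1=u_2$.

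\textbf{Non-existence for $\gamma\geq\frac{2+\alpha}{2-\alpha}$.} Suppose $u$ solves (\ref{p1}). Since $g\not\equiv 0$, pick $z_0\in\partial B$ with $g(z_0)>0$ and a neighbourhood $V$ of $z_0$ with $u(x)\geq c_0\,\delta(x)^{\alpha/2-1}$ on $V\cap B$. Fix $x_0\in B\setminus\overline V$. On $V\cap B$ the distance $|x_0-y|$ is bounded above and below, and $\delta(y)$ is small, so (\ref{mingreen}) gives $G_B^\alpha(x_0,y)\geq c'\,\delta(x_0)^{\alpha/2}\,\delta(y)^{\alpha/2}$, whence
\[
G_B^\alpha u^\gamma(x_0)\geq c''\int_{V\cap B}\delta(y)^{\alpha/2+\gamma(\alpha/2-1)}\,dy.
\]
The exponent $\alpha/2+\gamma(\alpha/2-1)\leq -1$ precisely when $\gamma\geq\frac{2+\alpha}{2-\alpha}$, and in that range the integral diverges. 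This contradicts $u(x_0)+G_B^\alpha u^\gamma(x_0)=M_B^\alpha g(x_0)<\infty$ from Proposition \ref{unicity}. The main obstacle I foresee is the existence step: a naive Picard iteration $u_{n+1}=v-G_B^\alpha u_n^\gamma$ can go negative already at $u_2$ when $\gamma>1$ and $\|g\|$ is large, so one must truncate (as above) or run a carefully matched two-sided monotone scheme, and then verify that the compactness/continuity needed for Schauder is genuinely supplied by the interior regularity of $G_B^\alpha(\delta^{-\lambda})$ from Lemma \ref{continuity}(b).
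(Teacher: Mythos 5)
Your overall architecture (reduce to the integral identity $u+G_B^\alpha u^\gamma=M_B^\alpha g$, existence by fixed point, non-existence by a Green-potential divergence) is close in spirit to the paper, but the non-existence part contains a genuine circularity. You invoke Proposition \ref{unicity} to assert $u(x_0)+G_B^\alpha u^\gamma(x_0)=M_B^\alpha g(x_0)<\infty$, yet Proposition \ref{unicity} is stated and proved only for $0<\gamma<\frac{2+\alpha}{2-\alpha}$: its ``solution implies integral identity'' direction rests on Lemma \ref{continuity}(a) with $\lambda=\gamma(1-\frac{\alpha}{2})$, which requires $\lambda<1+\frac{\alpha}{2}$, i.e.\ exactly $\gamma<\frac{2+\alpha}{2-\alpha}$. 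In the regime $\gamma\geq\frac{2+\alpha}{2-\alpha}$ the identity is precisely what must \emph{not} be assumed --- your own divergence estimate shows it cannot hold. The missing idea, which is the actual content of the paper's argument, is to produce the finiteness half of the contradiction independently: by Lemma \ref{l1}, $u(0)+G_{B_t}^\alpha(u^\gamma)(0)=H_{B_t}^\alpha u(0)$ for every $t<1$, and since $u\leq c\,\delta^{\frac{\alpha}{2}-1}$, the hypergeometric identity (\ref{euler}) shows that $t\mapsto H_{B_t}^\alpha\bp\delta^{\frac{\alpha}{2}-1}\ep(0)$ is bounded, whence $u(0)+G_B^\alpha(u^\gamma)(0)<\infty$ by letting $t\uparrow 1$. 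Your divergence computation itself (the exponent $\frac{\alpha}{2}+\gamma(\frac{\alpha}{2}-1)\leq-1$) agrees with the paper, with the small caveat that (\ref{mingreen}) requires $|x_0-y|>\max\{\delta(x_0)/2,\delta(y)/2\}$, so one should fix $x_0=0$ and restrict to $\frac12<|y|<1$ as the paper does, rather than take a generic $x_0$.

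The other two parts also have gaps, though repairable ones. For uniqueness, the minimum principle on shrinking balls $B_r\uparrow B$ fails as stated: Remark \ref{rmq2}(3) and Lemma \ref{comparison} require a sign on \emph{all} of $B_r^c$, which includes the annulus $B\setminus B_r$, where the sign of $u_1-u_2$ is unknown; $\delta^{1-\frac{\alpha}{2}}(u_1-u_2)\to0$ only gives $u_1-u_2=o(\delta^{\frac{\alpha}{2}-1})$ and yields no sign. One can repair this with an $\varepsilon$-barrier (compare $u_1$ with $u_2+\varepsilon M_B^\alpha 1$, which is a supersolution and strictly dominates near $\partial B$, then let $\varepsilon\to0$), but the paper avoids boundary-sign issues altogether: from $w+G_B^\alpha(hw)=0$, where $h$ is the difference quotient of $t\mapsto t^\gamma$, it concludes $w=0$ by the domination principle for potentials \cite{hansen}. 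For existence, your Schauder scheme is viable but two asserted steps need proofs: (i) that the truncation in $T(u)=(v-G_B^\alpha u^\gamma)_+$ is inactive at a fixed point is true but ``comparing with the trivial supersolution $v$'' is not an argument (for $\gamma<1$ the truncation genuinely can be active away from fixed points); the correct argument is that on $U=\{v<G_B^\alpha u^\gamma\}$ a fixed point vanishes, so $v-G_B^\alpha u^\gamma$ is $\alpha$-harmonic on $U$ with nonnegative exterior data, and the minimum principle forces $U=\emptyset$; (ii) Lemma \ref{continuity}(b) concerns the single function $G_B^\alpha(\delta^{-\lambda})$, not equicontinuity of the family $\{G_B^\alpha u^\gamma:\,0\leq u\leq v\}$, so the uniform modulus must be extracted from the \emph{proof} of (b), not its statement. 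The paper sidesteps both points with a monotone scheme: solve on $B_n$ with exterior datum $M_B^\alpha g$ via Proposition \ref{thm81}, obtain a decreasing sequence by Lemma \ref{comparison}, and pass to the limit by dominated convergence using Lemma \ref{continuity}(a), which is both shorter and self-contained given the paper's toolkit.
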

\begin{proof}\benu \item
Let $0<\gamma<\frac{2+\alpha}{2-\alpha}$. For every $n\geq 2$, we denote by  $B_n$ the ball of $\rn$ of center $0$ and radius $1-\frac{1}{n}$. In virtue of proposition \ref{thm81}, invoking that $M_B^\alpha g\in\cc{C}(\partial B_n)\cap \cc{L}_\alpha,$ for every $n\geq 1,$ there exists an unique    $u_n\in\cc{C}^+(\overline{B}_n)$ solution of the problem
$$
\bacc\ba{rcll}\dal u_n&=&u_n^\gamma&\text{ in } B_n\\
u_n&=&M_B^\alpha g&\text{ on }B_n^c.
\ea\right.
$$
Moreover,
\begin{equation}\label{a2}
u_n(x)+G_{B_n}^\alpha(u_n^\gamma)(x)=M_B^\alpha g(x)\quad;\quad x\in\rn.
\end{equation}
For every $n\geq 2$, we readily observe  that  $u_n\leq M_B^\alpha g$ which yields using Lemma \ref{comparison} that $u_{n+1}\leq u_n$. Denote $u=\inf_n u_n.$
On account of  (\ref{kim2}), for every $x, y \in \rn$  we have
\[G_{B_n}^\alpha(x,y)\,u_n^\gamma(y)\leq G_B^\alpha(x,y)(M_B^\alpha g)^\gamma(y)
\leq c\bnorme g\enorme^\gamma G_B^\alpha(x,y) \delta(y)^{\gamma(\frac{\alpha}{2}-1)}.
\]
Bearing in mind the hypothesis $0<\gamma<\frac{2+\alpha}{2-\alpha}$, we deduce from assertion (a) in Lemma \ref{continuity} that
\[\int_B G_B^\alpha(x,y)\delta(y)^{\gamma(\pa-1)}dy<\infty.\]
Thus, by the dominated convergence theorem,
$$
\lim_{n\to\infty}G_{B_n}^\alpha(u_n^\gamma)(x)=G_{B}^\alpha(u^\gamma)(x).
$$
Consequently, by letting $n$ tend to $\infty$ in (\ref{a2}), we obtain
$$
u(x)+G_B^\alpha(u^\gamma)(x)=M_B^\alpha g(x)\quad x\in\rn,
$$
and hence $u$ is a nonnegative solution to problem (\ref{p1}) in virtue of  Proposition \ref{unicity}.

Let us now prove the uniqueness. Let $u$ and $v$ be tow solutions of the problem (\ref{p1}). We denote $w:=u-v$ and $h$ the nonnegative function defined on $B$ by
\[h(x):=\bacc \ba{ll}\frac{u^\gamma(x)-v^\gamma(x)}{u(x)-v(x)}&\text{ if }u(x)\neq v(x)\\
0&\text{ if } u(x)=v(x)\ea\right.\]
Since, by proposition \ref{unicity}, $u+G_B^\alpha (u^\gamma)=M_B^\alpha g=v+G_B^\alpha (v^\gamma)$, we immediately deduce that $w+G_B^\alpha(wh)=0$ which is equivalent to
 $w+G_B^\alpha(hw^+)=G_B^\alpha(hw^-)$ since $G_B^\alpha(h \babsolu w\eabsolu)<\infty.$ Here and according to the notational custom, $w^+=\max(w,0)$ and $w^-=\max(-w,0)$.
In particular, $G_B^\alpha(hw^+)\leq G_B^\alpha(hw^-)$ on $\bacc w>0\eacc=\bacc w^+>0\eacc.$ Then, by the dominated principle \cite{hansen}, we must have $G_B^\alpha(hw^+)\leq G_B^\alpha(hw^-)$ on $B$ and hence $G_B^\alpha(h\,w)\leq 0=w+G_B^\alpha(h\,w),$ which yields that $w\geq0.$ Consequently, $0\leq w\leq w+G_B^\alpha(hw)=0$ and so $w=0$ as desired.
\item Let $\;\gamma\geq\frac{2+\alpha}{2-\alpha}$. Arguing by contradiction, we suppose that the problem (\ref{p1}) admits a nonnegative solution $u$.
Seeing that the function $\delta^{1-\pa}u$ is a continuous extension of $g$ on $\overline{B}$,
 we can find $c>0$ so that
$$
 \delta(x)^{1-\pa}u(x)\leq c\quad\text{ for every }x\in B.
$$
 Thus $H_{B_t}^\alpha u(0)\leq c H_{B_t}^\alpha (\delta^{\pa-1})(0)$ for every $0<t<1$ where $B_t,$ as before, is the ball of center $0$ and radius $t$.
 Since $\delta(y)\approx 1-\babsolu y\eabsolu^2$, it follows from (\ref{euler}) that the function $t\to H_{B_t}^\alpha (\delta^{\pa-1})(0)$ is  bounded on $[0,1]$. But, in virtue of Lemma \ref{l1}, for every $0<t<1$ we have
\[u(0)+G_{B_t}^\alpha(u^\gamma)(0)=H_{B_t}^\alpha u(0).\]
Thus, letting $t$ tend to $1$, we obtain
 \begin{eqnarray}
 u(0)+G_B^\alpha(u^\gamma)(0)&=&u(0)+\lim_{t\uparrow 1}G_{B_t}^\alpha(u^\gamma)(0)=\lim_{t\uparrow 1} H_{B_t}^\alpha u(0)<\infty.\label{a30}
 \end{eqnarray}
 On the other hand, let $z\in\partial B$ such that $g(z)>0$. Since, by hypothesis, $\lim_{x\to z}\delta(x)^{1-\pa}u(x)=g(z)$, there exist  $r,\eta>0$ small enough  such that
 \[\delta(x)^{1-\pa}u(x)\geq \eta\quad\text{ for every }x\in B\cap B(z,r).\]
 Then, using (\ref{mingreen}), we get
 \begin{eqnarray*}
 G_B^\alpha(u^\gamma)(0)&=&\int_BG_B^\alpha(0,y)u^\gamma(y)\,dy\\
 &\geq&\int_{\frac 12<\babsolu y\eabsolu<1}G_B^\alpha(0,y)u^\gamma(y)\,dy\\
 &\geq&c\,\eta^\gamma \int_{B\cap B(z,r)}\frac{\delta(y)^{\gamma(\pa-1)+\pa}}{\babsolu y\eabsolu^N}dy\\
 &=&\infty
 \end{eqnarray*}
  contradicting  (\ref{a30}). Hence, problem (\ref{p1}) has no solution.
 \eenu
\end{proof}
\section{Blow up solutions}

For $\gamma>0$, we shall investigate the existence of a nonnegative solution of the following  problem:
  \begin{equation}\label{pp22}\bacc\ba{l}\dal u=u^\gamma\text{ in }B\\
 u=0\text{ in }B^c\\
 \displaystyle{\lim_{x\to z\in\partial B}\delta(x)^{1-\pa}u(x)=\infty.}
 \ea\right.
 \end{equation}
 More precisely, we divide the positive real half line into different intervals, where (\ref{pp22}) admits no solutions and  at least one solution. In the later case, we give a  simple equivalent to the solution. Our proof does not require heavy computations. We would point out that solutions to problem (\ref{pp22}) are relatively with coarse singularity at the boundary and hence, they are, in some respects, the fractional counterpart of large solutions in the classical case $\alpha=2.$
\begin{thm}\label{th2}\benu
\item If $\,1+\alpha<\gamma<\frac{2+\alpha}{2-\alpha}$, then the problem (\ref{pp22}) admits a nonnegative solution $u$.
Furthermore,
\begin{equation}\label{encad}
 u\approx \delta^{\frac{\alpha}{1-\gamma}}\quad\text{ in }B.
\end{equation}
\item If $\,0<\gamma<1+\frac\alpha2$, then problem (\ref{pp22}) has no nonnegative solution.
\eenu
\end{thm}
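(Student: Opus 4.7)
The plan for Part 1 is to realize a solution of (\ref{pp22}) as the monotone limit of the moderate blow-up solutions of (\ref{p1}). Set $\beta:=\alpha/(\gamma-1)$; the hypothesis $1+\alpha<\gamma<(2+\alpha)/(2-\alpha)$ is equivalent to $\beta\in(1-\pa,1)$, so Lemma \ref{fond1} applied to $U(x):=(1-|x|^2)^{-\beta}$ yields $\dal U\approx(\alpha+2\beta-2)\,\delta^{-\alpha-\beta}$ with a strictly positive prefactor, and the identity $\alpha+\beta=\beta\gamma$ rewrites this as $\dal U\approx U^\gamma$ in $B$. Hence $CU$ is a supersolution of $\dal v=v^\gamma$ for $C$ large and $cU$ is a subsolution for $c$ small. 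For each $n\in\bb{N}^*$, let $u_n$ be the unique solution of (\ref{p1}) with $g\equiv n$ provided by the previous theorem; by Lemma \ref{comparison} the sequence $(u_n)$ is nondecreasing.

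The next step is the uniform upper bound $u_n\leq CU$ on $\rn$. Since $u_n\leq cn\delta^{\pa-1}$ globally (from $u_n\leq nM_B^\alpha 1$ and (\ref{kim2})) while $CU\approx C\delta^{-\beta}$ blows up at the strictly faster rate $-\beta<\pa-1$, there is $\rho_n<1$ with $CU\geq u_n$ on the annulus $\{\rho_n\leq|x|<1\}$; combined with $CU=u_n=0$ on $B^c$, Lemma \ref{comparison} on $B_{\rho_n}$ gives $u_n\leq CU$ on $\rn$. Setting $u:=\lim_n u_n$, the majorant $CU$ justifies dominated convergence in the identity $u_n+G_D^\alpha u_n^\gamma=H_D^\alpha u_n$ of Lemma \ref{l1} on every regular $D\subset\subset B$, yielding $u+G_D^\alpha u^\gamma=H_D^\alpha u$ and hence $\dal u=u^\gamma$ in $B$, $u\in\cc{C}(B)$, $u=0$ in $B^c$. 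The boundary blow-up $\lim_{x\to z}\delta(x)^{1-\pa}u(x)=\infty$ is inherited from $u\geq u_n$ with $\lim_{x\to z}\delta(x)^{1-\pa}u_n(x)=n$ for every $n$. The upper estimate in (\ref{encad}), namely $u\leq CU\approx C\delta^{\alpha/(1-\gamma)}$, is then immediate.

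For the matching lower bound $u\geq c\delta^{\alpha/(1-\gamma)}$, the plan is to work with the scaled family $\underline u^{(\rho)}(x):=c(\rho^2-|x|^2)^{-\beta}$ on $B_\rho$ (extended by $0$ outside), which by the scaling property $\dal(f(\cdot/\rho))=\rho^{-\alpha}(\dal f)(\cdot/\rho)$ combined with Lemma \ref{fond1} satisfies $\dal\underline u^{(\rho)}\geq(\underline u^{(\rho)})^\gamma$ on $B_\rho$ uniformly in $\rho\in[\tfrac12,1)$ for $c$ small. Comparing $u$ with $\underline u^{(\rho)}$ on $B_\rho$ and letting $\rho\to 1^-$ would give $u(x)\geq c(1-|x|^2)^{-\beta}\approx c\delta(x)^{\alpha/(1-\gamma)}$ in $B$. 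The obstacle is that $\underline u^{(\rho)}$ blows up on $\partial B_\rho$ while $u$ stays finite there, preventing a direct application of Lemma \ref{comparison}; reconciling this via truncation of $\underline u^{(\rho)}$ at level $k$ (and then letting $k\to\infty$ after $\rho\to 1^-$), or via a localized version of the minimum principle (Remark \ref{rmq2}(c)) applied to $u-\underline u^{(\rho)}$ on the open set $\{u<\underline u^{(\rho)}\}$ restricted away from the blow-up locus, is the main technical step of Part 1.

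For Part 2, suppose for contradiction that $u$ solves (\ref{pp22}) with $\gamma\in(0,1+\pa)$. Applying Lemma \ref{l1} on $B_t$ at $x=0$ gives $u(0)+G_{B_t}^\alpha u^\gamma(0)=H_{B_t}^\alpha u(0)$. The boundary condition provides, for each $n$, a neighborhood $\{\rho_n\leq|y|<1\}$ on which $u\geq nM_B^\alpha 1$; since $M_B^\alpha 1$ is $\alpha$-harmonic in $B$ (hence $H_{B_t}^\alpha M_B^\alpha 1=M_B^\alpha 1$ on $B_t$), monotonicity of $H_{B_t}^\alpha$ and $t\to 1^-$ yield $u(0)+G_B^\alpha u^\gamma(0)\geq nM_B^\alpha 1(0)$ for every $n$, i.e., $G_B^\alpha u^\gamma(0)=+\infty$. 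Converting this into an actual contradiction across the entire range $\gamma<1+\pa$ is the principal obstacle. For $\gamma\leq 1$ the argument closes quickly through Proposition \ref{unicity}: applied to the moderate blow-up solutions $u_n$ of (\ref{p1}) with $g\equiv n$, the identity $u_n=nM_B^\alpha 1-G_B^\alpha u_n^\gamma$ combined with $u_n\leq cn\delta^{\pa-1}$ and Lemma \ref{continuity}(a) gives $G_B^\alpha u_n^\gamma(0)\leq c'n^\gamma$, hence $u_n(0)\geq nc_0-c'n^\gamma\to\infty$ when $\gamma<1$; since any solution $u$ of (\ref{pp22}) would satisfy $u\geq u_n$ by comparison on shrinking subballs (its boundary blow-up strictly dominates that of $u_n$), $u(0)\geq u_n(0)\to\infty$ contradicts the continuity of $u$ on $B$. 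Extending this estimate up to the threshold $\gamma=1+\pa$ requires a sharper iterative use of $u_n=nM_B^\alpha 1-G_B^\alpha u_n^\gamma$ exploiting the additional $\delta^\pa$-decay of the Green potential from Lemma \ref{continuity}(c), and this refined bootstrapping is the key technical step of Part 2.
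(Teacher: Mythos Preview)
Your strategy differs from the paper's in both parts, and in each case the chosen route leaves a real gap that the paper's simpler construction avoids.

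\textbf{Part 1.} You approximate from below by the moderate blow-up solutions $u_n$ of (\ref{p1}) with $g\equiv n$. The upper bound $u\le CU$ is fine, but the lower bound $u\ge cU$ does not close. Your candidate $\underline u^{(\rho)}$ blows up on $\partial B_\rho$, so Lemma~\ref{comparison} (which requires $\cc{C}(\overline D)$) is inapplicable on $B_\rho$; truncating at level $k$ destroys the subsolution property, since the constant $k$ satisfies $\dal k=0<k^\gamma$; and the set $\{u<\underline u^{(\rho)}\}$ necessarily contains a collar of $\partial B_\rho$, so the minimum principle of Remark~\ref{rmq2} is not available there either. The underlying obstruction is that your $u_n$ satisfy $u_n\lesssim n\,\delta^{\pa-1}$, which is \emph{strictly weaker} than $\delta^{-\beta}$ near $\partial B$ (because $\beta>1-\pa$), so no annulus on which $u_n\ge cU$ can be arranged. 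The paper runs the approximation in the opposite direction: it solves $\dal u_n=u_n^\gamma$ on $B_n=B_{1-1/n}$ with exterior datum $u_n=v:=K\vartheta$ (Proposition~\ref{thm81}). Since $v$ and $w:=K^{-1}\vartheta$ are finite on $\partial B_n$, Lemma~\ref{comparison} applies on $B_n$ and yields $w\le u_{n+1}\le u_n\le v$ immediately; both inequalities in (\ref{encad}) then follow for $u=\inf_n u_n$ with no further work.

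\textbf{Part 2.} Your derivation of $G_B^\alpha u^\gamma(0)=+\infty$ is correct and is essentially the paper's claim (\ref{ginfty}). The divergence comes afterwards. Your contradiction via $u(0)\ge u_n(0)\ge nc_0-c'n^\gamma$ works only for $\gamma<1$; for $1\le\gamma<1+\pa$ the right-hand side is useless, and the proposed ``refined bootstrapping'' through Lemma~\ref{continuity}(c) only sharpens the \emph{boundary} behaviour of $G_B^\alpha u_n^\gamma$ while leaving the interior estimate $u_n(0)\ge nc_0-c'n^\gamma$ unchanged. The paper instead produces an \emph{upper} bound on $u$: from $u\in\cc L_\alpha$ it argues that $u\le c\,\delta^{-1}$ on $B$, whence $G_B^\alpha u^\gamma\le c^\gamma G_B^\alpha(\delta^{-\gamma})<\infty$ for every $\gamma<1+\pa$ by Lemma~\ref{continuity}(a), contradicting $G_B^\alpha u^\gamma(0)=+\infty$. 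That single pointwise bound $u\le c\,\delta^{-1}$ is precisely the ingredient missing from your outline.
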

\begin{proof}
\benu\item
Let $1+\alpha<\gamma<\frac{2+\alpha}{2-\alpha}$ and put $\beta := \frac{\alpha}{\gamma-1}$. It is obvious that $\beta$ satisfies $1-\pa<\beta<1$.
Let $\vartheta$ be the function defined on $\rn$ as follows.
$$
\vartheta(x)=
  \frac{1}{(1-|x|^2)^{\beta}} \;\; \mbox{if }\; |x|< 1\quad\text{ and }\quad
  \vartheta(x)=0 \;\; \mbox{if }\; |x|\geq 1.
$$
Then, by Lemma \ref{fond1}, there exists some constant $C>1$ such that, for every $x\in B$,
\begin{equation}\label{a20}
\frac{1}{C}\frac{1}{(1-\babsolu x\eabsolu^2)^{\alpha+\beta}}\leq \dal\vartheta(x)\leq C \frac{1}{(1-\babsolu x\eabsolu^2)^{\alpha+\beta}}.
\end{equation}
Let
 $K > 1$ be so that $C= K^{\gamma-1}$ and consider the functions $v:=K\,\vartheta$ and $w:=\frac 1K\,\vartheta$.
  Clearly,  $w\leq v$ and
$$
\dal v\leq v^\gamma \;\; \textrm{ and }\;\; \dal w \geq w^\gamma \;\textrm{ in }\,B.
$$
For every $n\geq 2$, we denote by  $B_n$ the ball of $\rn$ of center $0$ and radius $1-\frac{1}{n}$. Since $v\in\cc{C}(\partial B_n)\cap \cc{L}_\alpha$,  by proposition \ref{thm81} there exists a unique   $u_n\in\cc{C}^+(\overline{B}_n)$ solution of the problem
$$
\left\{
\begin{array}{rcll}
\dal u_n&=&u_n^\gamma &\textrm{ in }\, B_n\\
u_n&=&v &\textrm{ on }\, B_n^c.
 \end{array}
 \right.
$$
Moreover, using Lemma \ref{comparison}, it is easy to see that
$$
w\leq u_{n+1}\leq u_n \leq v.
$$
Put $u:=\inf_nu_n$, then we immediately deduce from these inequalities that    $u\in \cc{L}_\alpha$, $u=0$ on $B^c$, $u$ satisfies (\ref{encad}) and that $\lim_{x\to z\in\partial B}\delta(x)^{1-\pa}u(x)=\infty$ for every $z\in \partial B$. So, it remains to prove that $u\in C(B)$ and $\dal u= u^\gamma$ in $B.$  Let $D$ be a regular open set such that $\overline{D}\subset B$ and let $n_0$ be the smallest integer $n\geq2$ such that $\overline{D}\subset B_n$. Then for every $n\geq n_0$ and every $x\in \rn$,
\begin{equation}\label{dern}
u_n(x)+G_D^\alpha(u_n^\gamma)(x)= H_D^\alpha u_n(x)
\end{equation}
since $\dal u_n = u_n^\gamma$ in $D$ and $u_n\in \cc{C}(D)\cap \cc{L}_\alpha$. Whence, by letting $n$ tend to $\infty$ in (\ref{dern}), we obtain
 $$
u(x)+G_D^\alpha(u^\gamma)(x)=H_D^\alpha u(x)
$$
 which  entails that $u\in \cc{C}(D)$ because $H_D^\alpha u$ and $G_D^\alpha (u^\gamma)$ are in $\cc{C}(D)$. Since $D\subset B$ is arbitrary, we conclude that $u\in \cc{C}(B)$ and, by Lemma \ref{l1}, that  $\dal u= u^\gamma$ in $B$. This ends up the first part of the proof.
\item Let $\,0<\gamma<1+\frac\alpha2$ and assume that the problem (\ref{pp22}) admits a nonnegative solution $u$. We first claim that
\begin{equation}\label{ginfty}
G_B^\alpha(u^\gamma)(x)=\infty \;\textrm{ for every }\, x\in B.
\end{equation}
In fact, assume that $G_B^\alpha(u^\gamma)(x_0)<\infty$ for some $x_0\in B$ and consider the function $h$ defined on $\rn$ by
$$
h(x):= u(x)+G_B^\alpha(u^\gamma)(x)\; \textrm{ if }\,x\in B\,\textrm{ and }\;h(x)=0 \;\textrm{ if }\, x\notin B.
$$
Obviously, $h$ is singular $\alpha$-harmonic in $B$ since, by (\ref{operateurgreen}), $\dal G_B^\alpha(u^\gamma)= -u^\gamma$ in $B$. Then, it follows from \cite{kim2006} that $\lim_{x\to z}\delta(x)^{1-\pa}h(x)<\infty$ for some $z\in \partial B$. But this leads to a contradiction because $\lim_{x\to z}\delta(x)^{1-\pa}h(x)\geq \lim_{x\to z}\delta(x)^{1-\pa}u(x)$ and $\lim_{x\to z}\delta(x)^{1-\pa}u(x)=\infty$ by hypothesis. So, the claim is checked. On the other hand,  $u$ is Lebesgue integrable since it is in $\cc{L}_\alpha$ and then the function $\delta u$ must be bounded on $\overline{B}$ which yields the existence of some constant $c>0$ such that $u\leq c\,\delta^{-1}$ on $B$ and hence
$$
G_B^\alpha(u^\gamma)\leq c^\gamma\,G_B^\alpha(\delta^{-\gamma})\,\textrm{ on }\, B.
$$
Now, minding the hypothesis $\,0<\gamma<1+\frac\alpha2$, we deduce from assertion (a) in Lemma \ref{continuity} that $G_B^\alpha(\delta^{-\gamma})(x)<\infty$ for every $x\in B$ and consequently $G_B^\alpha(u^\gamma)(x)<\infty$ contradicting the claim (\ref{ginfty}). Hence, under the condition $\,0<\gamma<1+\frac\alpha2$, the problem (\ref{pp22}) has no nonnegative solution.
\eenu
\end{proof}
The restriction "$\gamma>\alpha+1$" seems to be technical but nearly optimal for the existence of solutions to problem (\ref{pp22}). We remember that this exponent "$\alpha+1$" appeared in \cite{chenfelmer} too. On the other hand, taking $\alpha=2,$ in virtue of   Theorem \ref{th2}, there exists no solution to problem (\ref{pp22}) for $0<\gamma<2$ and this is not consistent with the classical Keller-Osserman condition. In fact, it is well known that, in the old Laplacian case and taking in account the Keller-Osserman condition,  (large) solutions exist for every $\gamma>1$ and hence for $1<\gamma<2.$   Put differently, the classical Keller-Osserman condition, in the fractional setting, constitute  no longer the appropriate range where large solutions exists or do not exist as  in the Laplacian framework.

\end{document}